%
\documentclass[12pt, a4]{amsart}

\usepackage{latexsym}
\usepackage{amsmath}
\usepackage{amssymb}
\usepackage{amsbsy}
\usepackage[dvips]{graphicx,color}
\def\R {\mathbb{R}}
\def\Z {\mathbb{Z}}
\def\ZZ {\mathbb{Z}_{2}}
\def\C {\mathbb{C}}
\def\N {\mathbb{N}}
\providecommand{\SO}{\mathop{\rm SO}\nolimits}
\providecommand{\Int}{\mathop{\rm Int}\nolimits}
\addtolength{\textwidth}{1cm}
\addtolength{\oddsidemargin}{-5mm}
\addtolength{\evensidemargin}{-5mm}

\title{Links and submersions to the plane\\ on an open 3-manifold}

\author{Shigeaki Miyoshi}
\address{Department of Mathematics\\ Chuo University\\
   1-13-27 Kasuga Bunkyo-ku, Tokyo\\ 112-8551, Japan}
\email{miyoshi@math.chuo-u.ac.jp}

\thanks{
The author is supported by Grant-in-Aid for Scientific Research 23540106.
}

\keywords{knot, submersion, completely integrable vector field}
%
\subjclass[2000]{primary~57R30, secondary~57R99, 57M99, 57R25}
%
%
%
\newtheorem{lemma}{Lemma}               

\newtheorem{CrrctThm}{Theorem}

\newtheorem*{MainThm}{Main Theorem}
\newtheorem{AppendixThm}{Theorem}[section]
\newtheorem{AppendixLem}[AppendixThm]{Lemma}
\newtheorem*{AppendixClaim}{Claim}
\theoremstyle{definition}
%
\newtheorem*{defun}{Definition}        

\newtheorem{remark}{Remark}

\newtheorem{claim}{Claim}

\newtheorem*{acknowledgements}{Acknowledgements}

\newtheorem{AppendixRem}{Remark}[section]

\begin{document}

\baselineskip=18pt

\begin{abstract}    
We study the realization problem which asks if a given oriented link in
 an open 3-manifold can be realized as a fiber of a submersion to the
 Euclidean plane.  We correct the results obtained before by the author
 which contains an error and certain imperfection, and investigate a 
 necessary and sufficient condition for the realization in the words of
 well-known invariants.  We obtain the condition expressed by 
 the first homology group with mod 2 coefficient.  
\end{abstract}

\maketitle

\section{Introduction} 
\label{intro}

The purpose of this paper is two-fold.  First, we will correct a theorem
in \cite{Top_paper} by the author, and second, study a problem which
arises from the correction.  G. Hector and D. Peralta-Salas found 
the error in the theorem in \cite{Top_paper} and informed the author about it.
Moreover, they studied comprehensively the
realization problem which asks if a manifold can be embedded in another
manifold so that it is also a fiber of a submersion to Euclidean
space (see \cite{Hector-Peralta}).  

Before stating the correct theorem, we prepare some notions.  We mostly
work in the smooth ($C^{\infty}$) category in this paper.  Suppose that
$M$ is an open oriented $3$-manifold and $L$ is an oriented
$n$-component link in $M$.  In this paper we say that a manifold is {\em
open} if the boundary is empty and no component is compact.  Let $N(L)$
denote a small tubular neighborhood of $L$.  A {\em framing} $\nu$ of
$L$ is meant to be an embedding $\nu :\bigsqcup_{j=1}^{n}(S^{1}\times
D^{2})_{j}\rightarrow M$ onto  $N(L)$ which maps the cores
$\bigsqcup_{j=1}^{n}(S^{1}\times\{ 0\})_{j}$ onto
$L=\bigsqcup_{j=1}^{n}L_{j}$.  Here, $D^{2}$ denotes the unit disk in
$\C$ and $S^{1}=\partial D^{2}$.  We assume that any framings and their
restrictions to the cores are orientation-preserving.  We note that a
framing of $L$ induces a tangential framing of $TN(L)(=TM|N(L))$ and
vice versa.  Here, a tangential framing means a choice of a
trivialization of $TN(L)\cong N(L)\times (\R\times\R^{2})$ with $TN(L)|L
= TL\oplus TL^{\bot}\cong (L\times\R)\oplus (L\times\R^{2})$ where
$TL^{\bot}$ denotes a normal bundle to $TL$.  

\begin{defun}
 Suppose that $L$ represents the null-class in the locally finite
 homology group $H^{\infty}_{1}(M;\Z)$, i.e., the homology group of  
 locally finite (possibly) infinite chains.  A framing $\nu$
 of $L$ is said to be {\em preferred} (or {\em null-homologous}) if the
 union of the longitudes $\nu (\bigsqcup_{j=1}^{n}(S^{1}\times\{1 \})_{j})$
 represents the null-class in $H_{1}^{\infty}(M\setminus\Int N(L);\Z )$.
 We call $\nu (\bigsqcup_{j=1}^{n}(S^{1}\times\{1 \})_{j})$ the {\em preferred
 longitudes} of $N(L)$ with respect to the preferred framing $\nu$.  
\end{defun}
\begin{remark}
 If $L$ represents the null-class in $H_{1}^{\infty}(M;\Z )$, then there
 exists a preferred framing of $L$.  In fact, there exists an oriented
 (possibly non-compact) surface in $M$ bounded by $L$.  Choosing such a
 surface $S$, we have a framing of $L$ whose longitudes are
 $S\cap\partial N(L)$.  
\end{remark}
\begin{remark}
 Note that a preferred framing is not unique in general.  In fact, in
 the case of the core circle of the open solid torus, every framing is
 preferred.  Nevertheless, we invoke the terminology of {\em preferred longitudes} in
 \cite{Rolfsen}.  
\end{remark}

The correct theorem is the following.

 \begin{CrrctThm}\label{correct_thm}
For an oriented link $L$ in an open oriented $3$-manifold $M$, the
  following conditions are equivalent:
\begin{enumerate}
\item\label{realization}
there exists a submersion $\varphi : M\rightarrow\R^{2}$ such that up to
     isotopy the preimage $\varphi^{-1}(0)$ of the origin is $L$ and
     $\varphi$ maps the transverse orientation of $L$ to the standard
     orientation of $\R^{2}$, i.e., for any small disk $D$ transverse to
     $L$ with the orientation induced from those of $M$ and $L$, the
     restriction $\varphi |D$ preserves the orientation, and
\item\label{condition2}
the cycle $L$ represents the null-class in the locally finite homology
     group $H_{1}^{\infty}(M;\Z )$ and there exists a preferred
     framing of $L$ whose tangential framing is the restriction of some
     trivialization of $TM$.  
\end{enumerate}
\end{CrrctThm}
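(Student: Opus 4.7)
My plan is to establish the two implications in turn, with $(2) \Rightarrow (1)$ carrying the essential content.

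For $(1) \Rightarrow (2)$, suppose $\varphi : M \to \R^2$ is a submersion with $\varphi^{-1}(0) = L$ up to isotopy. I would first pull back a ray $r \subset \R^2$ emanating from the origin to obtain a properly embedded oriented surface $\Sigma = \varphi^{-1}(r)$ with $\partial \Sigma = L$, witnessing $[L] = 0$ in $H_1^{\infty}(M;\Z)$. Next, from the short exact sequence $0 \to \ker d\varphi \to TM \to \varphi^{*}T\R^2 \to 0$ of oriented bundles, the quotient is trivial (pulled back from $\R^2$) and $\ker d\varphi$ is an orientable, hence trivial, line bundle on $M$; a splitting therefore furnishes a global trivialization $\tau$ of $TM$. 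Along $L$ we have $\ker d\varphi|_L = TL$, so $\tau$ restricts to a tangential framing whose longitudes are isotopic to $\Sigma \cap \partial N(L)$, and these are preferred.

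For $(2) \Rightarrow (1)$ I would proceed in three steps. \emph{Local model:} use the preferred framing $\nu$ to define a submersion $\varphi_0 : N(L) \to D^2 \subset \R^2$ by $\varphi_0(\nu(\theta,z)) = z$, so that $\varphi_0^{-1}(0) = L$. \emph{Continuous extension:} the map $\varphi_0/|\varphi_0| : \partial N(L) \to S^1$ admits a continuous extension $\tilde\psi : M \setminus \Int N(L) \to S^1$ because the primary obstruction in $H^2(M \setminus \Int N(L), \partial N(L);\Z)$ is Poincar\'e--Lefschetz dual to the class of the preferred longitude in $H_1^{\infty}(M \setminus \Int N(L);\Z)$, which vanishes by the preferred-framing hypothesis. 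Inflating $\tilde\psi$ radially into $\R^2 \setminus \{0\}$ and gluing with $\varphi_0$ yields a continuous map $\psi : M \to \R^2$ with $\psi^{-1}(0) = L$, smooth on $N(L)$. \emph{$h$-principle upgrade:} project the given trivialization of $TM$ onto its last two factors to obtain a bundle epimorphism $TM \to M \times \R^2$ extending $d\varphi_0$; by Phillips' $h$-principle for submersions on open manifolds, in its relative form with $N(L)$ as the distinguished closed subset, this formal datum is realized by a smooth submersion $\varphi : M \to \R^2$ satisfying $\varphi|_{N(L)} = \varphi_0$, so in particular $\varphi^{-1}(0) \supseteq L$.

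The main obstacle I anticipate is forcing $\varphi^{-1}(0) = L$ (up to isotopy) rather than merely $\varphi^{-1}(0) \supseteq L$: the relative $h$-principle controls only the formal bundle data and can a priori introduce spurious components in the zero set. I plan to handle this by tracking the continuous map $\psi$ throughout Phillips' construction, arranging that the homotopy of formal submersions is covered by a homotopy of base maps starting at $\psi$, so that $\varphi$ is homotopic to $\psi$ relative to $N(L)$ and its zero set is therefore isotopic to $\psi^{-1}(0) = L$; alternatively, any stray zero-components are themselves null-homologous and can be removed by a subsequent isotopy of $\varphi$ supported away from $L$. Reconciling the bundle-theoretic input of the $h$-principle with the geometric constraint on the zero locus is the crux.
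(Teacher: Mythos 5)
Your direction $(1)\Rightarrow(2)$ is fine and matches the paper. The genuine gap is exactly where you locate it, in the last step of $(2)\Rightarrow(1)$, and neither of your two proposed repairs works. First, knowing that the submersion $\varphi$ produced by the $h$-principle is homotopic to $\psi$ relative to $N(L)$ gives no control on $\varphi^{-1}(0)$: homotopic maps do not have isotopic point-preimages (a map hitting $0$ is homotopic to one missing $0$), and Phillips' theorem only produces a regular homotopy of formal data lifted through the fibration $\rho$; it says nothing about the fibers of the resulting submersion away from $N(L)$. Second, stray components of $\varphi^{-1}(0)$ cannot in general be removed by an isotopy of $\varphi$ supported away from $L$: they may be non-compact, infinitely many, and may run off to the ends; this is precisely why Theorem B of the paper is a strictly weaker statement (only the \emph{compact} components of $\varphi^{-1}(0)$ are controlled) and why its proof must cut $M$ open along properly embedded rays to the ends --- an operation that replaces $M$ by a diffeomorphic open submanifold, not an isotopy of the map.

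The paper's resolution is a simple but essential change of target: it applies the $h$-principle not on $M$ with target $\R^{2}$ but on the exterior $X=M\setminus\Int N_{1/2}(L)$ with target $C=\R^{2}\setminus\Int D^{2}(\tfrac{1}{2})$, extending $\pi|W$ where $W=N(L)\setminus\Int N_{1/2}(L)$. Since every element of $\mathrm{Sbm}(X,C)$ misses the origin by construction, gluing the resulting submersion (which restricts to $\pi|W$ on $W$, using that $\rho:\mathrm{Sbm}(X,C)\rightarrow\mathrm{Sbm}(W,C)$ is a fibration) with the projection $\pi$ on $N(L)$ yields a submersion $M\rightarrow\R^{2}$ whose zero set is exactly $L$, with no cleanup needed. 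Your obstruction computation for extending the formal data (Poincar\'e--Lefschetz duality applied to the class of the preferred longitude) is the same as the paper's, carried out for a map $X\rightarrow C\simeq S^{1}$; rerunning your second and third steps with this punctured target makes the crux you worry about disappear.
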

\noindent
Theorem \ref{correct_thm} is also a consequence of Theorem 2.4.2 in
\cite{Hector-Peralta}.  In the original incorrect theorem (Theorem 1 in
\cite{Top_paper}) the above extension condition of the framing in
(\ref{condition2}) is missing. Here, we explain briefly how it fills the gap in
the original proof.  If (\ref{realization}) holds, then the canonical
trivialization of the tangent bundle of $\R^{2}$ is pulled back to a
normal bundle to the fibers.  With a trivialization of the tangent
bundle to the fibers, it determines a trivialization of $TM$ which
restricts to a tangential framing of $L$.  The projection map from $N(L)$
onto the meridian disk determined by this framing must coincide with
the submersion restricted to $N(L)$.  Conversely, by the assumption that
the framing of $L$ is preferred,  the projection map
$N(L)\approx\bigsqcup_{j=1}^{n}(S^{1}\times D^{2})_{j}\rightarrow D^{2}$
extends to a map $(M, M\setminus\Int N(L))\rightarrow (\R^{2},
\R^{2}\setminus\Int D^{2})$ and moreover an extension of the
(tangential) framing of $L$ to $M$ ensures that we can take a submersion
$M\rightarrow\R^{2}$ as the extended map.  This is an application of the
{\em h-principle}, in this case, A. Phillips' submersion classification
theory \cite{Phillips}.  In the proof in \cite{Top_paper}, it is only
shown that an extension as a map exists since the framing of $L$ is
preferred.  However, in order to apply Phillips' theory to have an
extended submersion, we need the requirement of the simultaneous
extension of the tangential framing of $L$ and the projection map on
$N(L)$ to the whole manifold $M$.  

We note that Theorem 2 in \cite{Top_paper} is correct even though the
proof in \cite{Top_paper} is not completed.  
\begin{CrrctThm}[Theorem 2 in \cite{Top_paper}]\label{thm_B}
For any link $L$ in an open orientable $3$-manifold, there is a submersion
 $\varphi : M\rightarrow\R^{2}$ such that up to isotopy the union of
 compact components of $\varphi^{-1}(0)$ is $L$.  
\end{CrrctThm}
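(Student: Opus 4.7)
The plan is to deduce Theorem~\ref{thm_B} from Theorem~\ref{correct_thm} by enlarging the compact link $L$ to a proper $1$-submanifold $\tilde L = L \sqcup A$, where $A$ is a disjoint union of oriented properly embedded lines in $M \setminus L$, chosen so that $\tilde L$ satisfies conditions (\ref{realization}) and (\ref{condition2}) of Theorem~\ref{correct_thm}. Theorem~\ref{correct_thm} then produces a submersion $\varphi : M \rightarrow \R^{2}$ with $\varphi^{-1}(0) = \tilde L$ up to isotopy, whose compact components are exactly the prescribed $L$.

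The first ingredient is that every orientable $3$-manifold is parallelizable, so we fix a global trivialization $\tau$ of $TM$. Using $\tau$, any oriented $1$-submanifold $\tilde L \subset M$ inherits a canonical tangential framing that tautologically extends to the trivialization $\tau$. To verify condition~(\ref{condition2}) of Theorem~\ref{correct_thm} for $\tilde L$ it therefore suffices to construct $A$ so that (i) $\tilde L$ is null in $H_{1}^{\infty}(M;\Z)$ and (ii) the normal framing of $\tilde L$ induced by $\tau$ is \emph{preferred}, i.e.\ its longitudes are null in $H_{1}^{\infty}(M \setminus \Int N(\tilde L);\Z)$.

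For (i) I would use Poincar\'e--Lefschetz duality $H_{1}^{\infty}(M;\Z) \cong H^{2}(M;\Z)$ to represent the class $-[L]$ by a disjoint union $A_{0}$ of oriented properly embedded lines in $M \setminus N(L)$; the openness of $M$ allows one to push any compact representative off to infinity. Setting $\tilde L_{0} = L \sqcup A_{0}$, we then obtain a proper oriented surface $\Sigma_{0}$ with $\partial \Sigma_{0} = \tilde L_{0}$, establishing (i). For (ii), the $\tau$-framed longitudes and the longitudes cut out by $\Sigma_{0}$ differ, on each component, by an integer twist. I would cancel these twists by adjoining further oriented properly embedded lines $A_{1} \subset M \setminus \Int N(\tilde L_{0})$ running parallel to the offending components of $\tilde L_{0}$ with appropriate linking, exploiting the fact that framings along a non-compact line have contractible parameter space and can therefore absorb any prescribed twist. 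Taking $\tilde L = L \sqcup A_{0} \sqcup A_{1}$, both (i) and (ii) hold, and Theorem~\ref{correct_thm} applies.

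The main obstacle I anticipate is the simultaneous realization of (i) and (ii): the lines $A_{1}$ must be arranged so as not to reintroduce a nontrivial class in $H_{1}^{\infty}(M;\Z)$, and must remain disjoint from $\tilde L_{0}$ and from one another. This calls for a careful transversality argument together with a joint accounting of the two obstructions, namely the homology class of $\tilde L$ in $M$ and the longitude class in the complement of $\tilde L$, in order to confirm that they can be killed compatibly within $M \setminus N(L)$.
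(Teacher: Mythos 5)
Your strategy---enlarge $L$ by properly embedded lines to a proper $1$-submanifold $\tilde L=L\sqcup A$ satisfying condition (2) of Theorem~\ref{correct_thm}, then quote that theorem---is genuinely different from the paper's, and as written it has two gaps. First, Theorem~\ref{correct_thm} is stated and proved only for links, i.e.\ finite unions of \emph{circles}. For a $\tilde L$ with non-compact components the proof in Section~\ref{appendix} does not apply as stated: there $W=N(\tilde L)\setminus\Int N_{1/2}(\tilde L)$ would be non-compact, while Lemmas~\ref{Phillips_byproduct} and~\ref{fibration_lemma} (the fibration property of the restriction maps, which drives the whole diagram chase) require $W$ to be a \emph{compact} codimension-$0$ submanifold. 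A version of Theorem~\ref{correct_thm} for proper $1$-submanifolds is essentially what Hector--Peralta prove, but you would have to supply it. Second, and more seriously, the claim that ``any oriented $1$-submanifold inherits a canonical tangential framing from $\tau$ that tautologically extends'' is false: a trivialization of $TM$ singles out no preferred completion of the unit tangent vector to a frame, and on a circle component the framings form a $\Z$-torsor whose mod~$2$ reduction governs extendability (one meridional twist flips $[f_{\nu}]\in\pi_{1}(\SO(3))\cong\ZZ$). Condition (2) demands that one and the same framing be simultaneously \emph{preferred} and extendable, and the content of the Main Theorem is precisely that these two requirements can be incompatible (e.g.\ for every knot in $\R^{3}$). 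Your plan to restore compatibility by adding further lines $A_{1}$ with prescribed linking is exactly the step you flag as ``the main obstacle'' and leave unproved; without it the argument does not close.

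The paper takes a cheaper route that sidesteps both issues. It applies Phillips' theory to all of $M$ with target $\R^{2}$ rather than to the link exterior with target $\R^{2}\setminus\Int D^{2}(\tfrac{1}{2})$: since $\R^{2}$ is contractible, extending the projection $\pi:N(L)\to D^{2}$ as a \emph{map} is automatic, and the bundle-level extension needs only a framing of $L$ that extends to a trivialization of $TM$, arranged by at most one meridional twist per component---no preferred framing and no null-homology hypothesis are needed. The resulting submersion $\hat\varphi$ restricts to $\pi$ on $N(L)$, so $\hat\varphi^{-1}(0)$ contains $L$ but may have extra compact components $R$. These are destroyed not by enlarging the fiber in advance but by excising a locally finite family of properly embedded rays $\alpha_{m}$ running through the circles of $R$ out to the ends of $M$; since $M$ minus such a family is diffeomorphic to $M$ rel $L$, restricting $\hat\varphi$ cuts the residual circles open into non-compact fiber components. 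That excision trick is the idea your proposal is missing; to salvage your approach instead, you would need both the non-compact extension of Theorem~\ref{correct_thm} and the joint obstruction analysis for killing the homology class and the longitude class compatibly.
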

\noindent
In order to prove this theorem, we have to choose a (tangential) framing
of $L$ which is the restriction of some trivialization over the whole
manifold $M$.  This can be always done by twisting a framing once around
the meridional direction if necessary.  Note that we need not to require
that the framing is preferred here.  This observation is missing in the proof in
\cite{Top_paper}.  
Theorem \ref{thm_B} is also proved in Application 2.3.8
in \cite{Hector-Peralta}.  
We will give the proof of Theorem \ref{correct_thm} and \ref{thm_B} in
Section \ref{appendix} as an appendix.  

As a consequence of the correction, there arises a question to find a
criterion for a link to be a fiber of a submersion to the plane in the words of
well-known invariants.  We will answer to this question for the case of a
knot.  Suppose that $M$ is an open oriented 3-manifold and $K$ is an
oriented knot in $M$.  For the simplicity, we say that $K$ is {\em
realizable} if $K$ satisfies the condition (\ref{realization}) in Theorem
\ref{correct_thm}. 

The following is the main theorem of this paper.
\begin{MainThm}\label{main_thm}
Assume that $K$ represents the null-class in $H_{1}^{\infty}(M;\Z )$.
 Then, $K$ is realizable if and only if $K$ represents a non-zero class in
 $H_{1}(M;\ZZ )$, where $\Z_{2}=\Z/2\Z$.  
\end{MainThm}

\noindent
In order to prove the Main Theorem, it suffices to show the following
two claims. Let $\kappa$ denote the homology class $\iota_{\ast}([K])\in
H_{1}(M;\Z )$, where $\iota :K\hookrightarrow M$ is the inclusion map and
$[K]$ denotes the fundamental class of $K$.  
Also, let $\kappa_{(2)}$ denote the $\ZZ$-reduction of $\kappa$ in
$H_{1}(M;\ZZ )$.  
\begin{claim}\label{claim_if}
 If $\kappa_{(2)}\neq 0$ then $K$ is realizable.
\end{claim}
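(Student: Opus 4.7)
My plan is to verify condition (\ref{condition2}) of Theorem \ref{correct_thm}, and invoke the implication (\ref{condition2})$\Rightarrow$(\ref{realization}), by producing a preferred framing of $K$ whose tangential framing extends to a trivialization of $TM$. Since $K$ is null-homologous in $H_{1}^{\infty}(M;\Z)$ by the hypothesis of the Main Theorem, a preferred framing $\nu$ of $K$ exists by the remark following the Definition. Moreover, $M$ being an open orientable $3$-manifold, $TM$ is trivializable, and I fix once and for all a trivialization $\tau$ of $TM$.

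Now $\tau|_{N(K)}$ and the tangential framing induced by $\nu$ differ over $N(K)$ by a gauge transformation $g:N(K)\to\SO(3)$, whose homotopy class lies in $[N(K),\SO(3)]=\pi_{1}(\SO(3))=\ZZ$. If $[g]=0$ then $\nu$ already has the required property; otherwise, it suffices to find a map $h:M\to\SO(3)$ with $h|_{N(K)}\simeq g$, for then $\tau\cdot h$ is a global trivialization of $TM$ whose restriction to $N(K)$ represents the tangential framing of $\nu$.

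Existence of such an $h$ is reduced to a cohomological condition via obstruction theory. The obstructions to extending a map $K\to\SO(3)$ over $M$ lie in $H^{n+1}(M,K;\pi_{n}\SO(3))$; since $\pi_{2}\SO(3)=0$ and $\dim M=3$ (so $H^{k}(M,K;-)=0$ for $k\geq 4$), the sole obstruction lives in $H^{2}(M,K;\ZZ)$, and it is the image of $[g]\in H^{1}(K;\ZZ)$ under the connecting homomorphism of the pair $(M,K)$. The relevant portion of the long exact sequence is
\[
H^{1}(M;\ZZ)\xrightarrow{i^{*}} H^{1}(K;\ZZ)\to H^{2}(M,K;\ZZ),
\]
so the obstruction vanishes iff $i^{*}$ is surjective. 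Since $\ZZ$ is a field, $H^{1}(-;\ZZ)=\mathrm{Hom}(H_{1}(-;\ZZ),\ZZ)$, and surjectivity of $i^{*}$ is dual to injectivity of $H_{1}(K;\ZZ)\to H_{1}(M;\ZZ)$, that is, to $\kappa_{(2)}\neq 0$.

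Under the hypothesis $\kappa_{(2)}\neq 0$, the required $h$ exists, and Theorem \ref{correct_thm} then yields the desired submersion. The point I expect to demand the most care is the precise interpretation of ``tangential framing is the restriction of some trivialization of $TM$'' in (\ref{condition2}): I take this to be up to homotopy, which is what the $h$-principle invoked in the appendix actually requires, so that once a trivialization matching $\nu$'s tangential framing up to homotopy has been built, a further deformation of $h$ supported in a neighborhood of $N(K)$ will make the restriction literally agree with $\nu$'s tangential framing.
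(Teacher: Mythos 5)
Your proof is correct and follows essentially the same route as the paper: the paper's Lemma \ref{extension_lem} is exactly your obstruction-theoretic reduction (using that an open $3$-manifold has the homotopy type of a $2$-complex and that $\pi_{1}(\SO(3))=\ZZ$ is the only relevant coefficient group), and the paper's explicit construction of an extending homomorphism via the projection of the $\ZZ$-vector space $H_{1}(M;\ZZ)$ onto $\langle\kappa_{(2)}\rangle$ is precisely the dual of your statement that $i^{*}$ on $H^{1}(-;\ZZ)$ is surjective when $\kappa_{(2)}\neq 0$. The only cosmetic difference is that you extend the gauge difference $g$ between a fixed global trivialization and the preferred framing, while the paper extends the $\sigma$-framing itself relative to the fixed parallelization $\Pi$; your closing remark about upgrading a homotopy to literal agreement near $N(K)$ is the same implicit step the paper takes.
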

\begin{claim}\label{claim_onlyif}
 If $\kappa_{(2)}=0$ then $K$ is not realizable.
\end{claim}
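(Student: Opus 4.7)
The plan is to prove the contrapositive of Claim \ref{claim_onlyif}: assuming $K$ is realizable, I will derive $\kappa_{(2)} \neq 0$. The strategy is to recast realizability as an extension problem for tangential framings on $K$, to identify the obstruction as a class in $H^{1}(K;\ZZ) = \ZZ$ that must lie in the image of the restriction map $\iota^{\ast}: H^{1}(M;\ZZ) \to H^{1}(K;\ZZ)$, and to connect this image to $\kappa_{(2)}$ via the universal coefficient theorem.

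First, I would apply Theorem \ref{correct_thm}: realizability provides a preferred framing $\nu$ of $K$ and a trivialization $\tau$ of $TM$ (which exists since $M$ is an open orientable $3$-manifold, hence parallelizable) whose restriction to $N(K)$ coincides with the tangential framing $\tau_{\nu}$ induced by $\nu$. Fixing an arbitrary reference trivialization $\tau_{0}$ of $TM$, the comparison $g = \tau\cdot\tau_{0}^{-1}:M\to\SO(3)$ represents a class in $[M,\SO(3)]$. Because $\pi_{2}(\SO(3))=0$ and $H^{3}(M;\Z)=0$ (since $M$ is a non-compact $3$-manifold), obstruction theory identifies $[M,\SO(3)]$ with $H^{1}(M;\ZZ)$ through the primary difference class, and similarly $[K,\SO(3)] = H^{1}(K;\ZZ) = \ZZ$. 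Hence the tangential framing discrepancy $[\tau_{\nu}] - [\tau_{0}|_{K}] = \iota^{\ast}[g]$ lies by construction in the image of $\iota^{\ast}$.

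Next, I would translate this condition into one about $\kappa_{(2)}$. Over $\ZZ$ the universal coefficient pairing gives $\langle\iota^{\ast}\omega,[K]\rangle = \langle\omega,\iota_{\ast}[K]\rangle = \langle\omega,\kappa_{(2)}\rangle$ for every $\omega\in H^{1}(M;\ZZ)$, so $\iota^{\ast}$ is surjective if and only if $\kappa_{(2)}\neq 0$, and is the zero map if and only if $\kappa_{(2)} = 0$.

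The main step, and what I expect to be the principal obstacle, is to show that the discrepancy $[\tau_{\nu}] - [\tau_{0}|_{K}]$ is the nonzero element of $H^{1}(K;\ZZ)$ for every preferred framing $\nu$ of $K$ and every trivialization $\tau_{0}$. Granting this, the image of $\iota^{\ast}$ must contain the generator of $H^{1}(K;\ZZ)$, so $\iota^{\ast}$ is surjective, whence $\kappa_{(2)}\neq 0$ by the previous paragraph, contradicting the hypothesis. For this computation I plan to use the $H_{1}^{\infty}$-null hypothesis to produce an oriented surface $S\subset M$ (possibly non-compact) with $\partial S = K$, to realize $\nu$ as the Seifert-type framing induced by $S$, and to analyze the $\SO(3)$-valued loop obtained by comparing the $S$-adapted frame along $K$ with $\tau_{0}|_{K}$. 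A local normal-form analysis in a tubular neighborhood of $K$ inside $S$ should show that this loop is homotopic to the generator of $\pi_{1}(\SO(3)) = \ZZ$, generalizing the explicit calculation for the unknot bounding a flat disk in $\R^{3}$.
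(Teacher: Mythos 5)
Your reformulation of the problem is sound and is essentially the cohomological dual of what the paper does: via Theorem \ref{correct_thm} and an obstruction-theoretic reduction (the paper's Lemma \ref{extension_lem}), realizability becomes the statement that the class of the preferred $\sigma$-framing in $\pi_{1}(\SO(3))\cong\ZZ$ lies in the image of the restriction, and your universal-coefficient observation that this image is $0$ or all of $\ZZ$ according to whether $\kappa_{(2)}$ vanishes matches the paper's argument. The gap is entirely in your ``main step,'' and it is twofold. First, the statement you propose to prove --- that $[\tau_{\nu}]-[\tau_{0}|_{K}]$ is the \emph{nonzero} element for \emph{every} preferred framing $\nu$ and every $\tau_{0}$ --- is false. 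For the core $K$ of the open solid torus $M=S^{1}\times\R^{2}$ every framing is preferred (Remark 2 of the paper), and twisting a framing once meridionally changes the class in $\pi_{1}(\SO(3))$ by $1$; hence there are preferred framings with discrepancy $0$ (e.g.\ the product framing relative to the product trivialization). Here $\kappa_{(2)}\neq 0$, so this does not contradict the Claim, but it shows the correct statement is conditional on $\kappa_{(2)}=0$, and therefore that hypothesis must enter the computation in an essential, global way. Your contrapositive set-up, which never assumes $\kappa_{(2)}=0$, cannot accommodate this; the argument has to be run as a direct contradiction.

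Second, the proposed method --- a local normal-form analysis in a tubular neighborhood of $K$ inside a Seifert-type surface $S$ --- cannot compute the class $[\tau_{\nu}]-[\tau_{0}|_{K}]\in\pi_{1}(\SO(3))$, because that class is a global invariant of $S$ and of the ambient trivialization: near $\partial S$ the adapted frame looks the same for every surface. When $S$ is compact with $\partial S=K$ connected the class is controlled by $\chi(S)=1-2g$ (odd), which is the content of the paper's Lemma \ref{homologous_framings}; but under the hypotheses of the Claim $K$ may bound only \emph{non-compact} locally finite surfaces, for which there is no Euler-characteristic control and your unknot-in-$\R^{3}$ calculation does not generalize. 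This is precisely the difficulty the paper's Lemmas \ref{(2,1)-framing}--\ref{transportation_lemma} are built to overcome: from $\kappa_{(2)}=0$ one deduces that the preferred longitude class satisfies $\lambda=2\zeta$ (Lemma \ref{longitude_null} plus the Bockstein sequence), one constructs from this a \emph{compact} oriented cobordism from $K$ to the $(2,1)$-cable $Z_{d}$ of a representative $Z$ of $\zeta$, and one computes the revolution $\sigma$-framing of a $(2,1)$-cable explicitly to be $2[f_{\nu}]+1\equiv 1$. Without some substitute for this divisibility-and-cabling step, your outline does not close.
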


\begin{remark}
 As mentioned earlier, G. Hector and D. Peralta-Salas
 \cite{Hector-Peralta} studied this kind of realization problem in the
 more general dimensions and setting.  As one application of their
 theory, they obtained a characterization for a link in $\R^{3}$ to be
 realizable and in particular they 
 showed that no knot in $\R^{3}$ is realizable.  One may
 consider that the Main Theorem generalizes the result.  
\end{remark}
\begin{remark}
 In the case of links, the argument will be a rather complicated nuisance.
 It might be just a technicality, nevertheless we omit here the consideration
 in the case of links at all.  The complete research including the
 general case of links should be done in the sequel.  
\end{remark}
In Section \ref{proof_claim_1}, we describe the notion of tangential
framings of oriented knots from the homotopical viewpoint, and  prove
Claim \ref{claim_if}.  In Section \ref{proof_claim_2}, we study the
properties of framings of oriented knots and prove Claim
\ref{claim_onlyif}.  For the reader's convenience, we state a part of
Phillips' theory \cite{Phillips} which we need and give the proofs of
Theorem \ref{correct_thm} and \ref{thm_B} in Section \ref{appendix}, as
an appendix.

\section{Proof of Claim \ref{claim_if}}\label{proof_claim_1}
First, we express the notion of framings of oriented knots
in different words.  Suppose that $M$ is an oriented open $3$-manifold
and $K$ is an oriented knot in $M$.   We fix a trivialization $\Pi : TM\cong
M\times\R^{3}$ throughout the paper.  Suppose any framing $\nu :
S^{1}\times D^{2}\rightarrow N(K)$ is given.  Isotoping $\nu$ if
necessary, we may assume that each meridian disk $\nu
(\{\mathrm{exp}t\sqrt{-1}\}\times D^{2})$ is normal to $K$ with respect
to the metric induced by $\Pi$.  We will define a map $f_{\nu}:
K\rightarrow\SO(3)$ which is an alternate of $\nu$ under $\Pi$
as follows.  For any point $p\in K$, let $(v_{1}(p), v_{2}(p),
v_{3}(p))$ be the orthonormal frame of $T_{p}M=\R^{3}$ determined by the
tangent bundle and the normal bundle to $K$.  Precisely, $v_{1}(p)$ is
the unit tangent vector to $K$, $v_{2}(p)$ is the unit normal vector
determined by $\frac{\partial}{\partial x}$, and another unit normal
vector $v_{3}(p)$ is chosen by the orientation of $M$.  Here, we write
$z=x+y\sqrt{-1}\in D^{2}$.  Thus, with respect to $\Pi$, this orthonormal
frame $(v_{1}(p), v_{2}(p), v_{3}(p))$ can be expressed as a special
orthogonal matrix.  We define $f_{\nu}(p):= (v_{1}(p), v_{2}(p),
v_{3}(p))\in\SO(3)$.  

\begin{defun}
We call the resulting map $f_{\nu}:K\rightarrow\SO(3)$ a {\em
 $\sigma$-framing} of $K$ with respect to $\nu$. 
\end{defun}

\begin{remark}
In fact, a $\sigma$-framing is the component of a cross section of the
frame bundle $\mathrm{Fr}(TM)\cong M\times\SO(3)$ associated with $TM$ with the
trivialization induced by $\Pi$.   
\end{remark}

It can be easily seen that under the parallelization $\Pi$ any map $K\rightarrow\SO(3)$
determines a framing $S^{1}\times D^{2}\rightarrow N(K)$ up to isotopy.
Thus, to choose a framing (up to isotopy) and to choose a $\sigma$-framing (up
to homotopy) are equivalent.  Moreover, since $[S^{1}, \SO
(3)]\cong\mathrm{Hom}(\pi_{1}(S^{1}), \pi_{1}(\SO (3)))\cong\pi_{1}(\SO
(3))\cong H_{1}(\SO(3);\Z )\cong\ZZ$ with appropriate choices of base
points, we may identify the homotopy class $[f_{\nu}]$ of a
$\sigma$-framing with its image $(f_{\nu})_{\ast}([K])\in H_{1}(\SO
(3);\Z )\cong\pi_{1}(\SO(3))$.  Note that a tangential framing of $K$ is
the restriction of some trivialization of $TM$ if and only if the
corresponding $\sigma$-framing of $K$ extends to $M$ as a map.  

Now we have the following criteria for the existence of an extension of a
$\sigma$-framing.

\begin{lemma}\label{extension_lem}
 Let $M$ be an open orientable $3$-manifold and $K$ a knot in $M$.
 Suppose that a map $f:K\rightarrow\SO (3)$ is given.  Then the
 following are equivalent: 
\begin{enumerate}
\item
the map $f:K\rightarrow\SO (3)$ extends to a map $M\rightarrow\SO (3)$, 
\item
the induced homomorphism $f_{\ast}:\pi_{1}(K)\rightarrow\pi_{1}(\SO
     (3))$ extends to a homomorphism $\pi_{1}(M)\rightarrow \pi_{1}(\SO
     (3))$, and 
\item
the homomorphism $f_{\ast}:H_{1}(K;\Z )\rightarrow H_{1}(\SO (3);\Z )$
     extends to a homomorphism $H_{1}(M;\Z )\rightarrow H_{1}(\SO (3);\Z
     )$.  
\end{enumerate}
 Moreover, in the implication from $(2)$ or $(3)$ to $(1)$, the resulting
 extension map $M\rightarrow\SO(3)$ induces the given extended
 homomorphism.  
\end{lemma}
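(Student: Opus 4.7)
The implication $(1)\Rightarrow(2)$ is just functoriality of $\pi_{1}$, and the equivalence $(2)\Leftrightarrow(3)$ is essentially tautological here: since $K\cong S^{1}$ we have $\pi_{1}(K)\cong H_{1}(K;\Z)\cong\Z$, and since $\pi_{1}(\SO(3))\cong\ZZ$ is abelian, every homomorphism $\pi_{1}(M)\to\pi_{1}(\SO(3))$ factors uniquely through the Hurewicz surjection $\pi_{1}(M)\twoheadrightarrow H_{1}(M;\Z)$. This gives a canonical bijection $\mathrm{Hom}(\pi_{1}(M),\pi_{1}(\SO(3)))=\mathrm{Hom}(H_{1}(M;\Z),\ZZ)$ that is compatible with restriction along $K\hookrightarrow M$, so the two extension conditions are literally the same problem.

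For the substantive direction $(3)\Rightarrow(1)$, I would triangulate $M$ so that $K$ becomes a subcomplex and apply classical cellular obstruction theory to the extension problem. The successive obstructions to extending $f:K\to\SO(3)$ lie in $H^{i+1}(M,K;\pi_{i}(\SO(3)))$ for $i\ge 1$. Because $\pi_{2}(\SO(3))=0$, the potential obstruction in dimension $3$ lives in a trivial group, and because $\dim M=3$ no obstructions appear in dimensions $\ge 4$. Hence the only potentially non-vanishing obstruction is the primary one, a class $\theta(f)\in H^{2}(M,K;\pi_{1}(\SO(3)))=H^{2}(M,K;\ZZ)$.

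To identify $\theta(f)$, I would use $[S^{1},\SO(3)]\cong\pi_{1}(\SO(3))\cong\ZZ$ to view $f$ as a cohomology class $c_{f}\in H^{1}(K;\ZZ)$, which under the universal coefficient isomorphism $H^{1}(K;\ZZ)\cong\mathrm{Hom}(H_{1}(K;\Z),\ZZ)$ is exactly $f_{\ast}$ read mod $2$. Naturality of the primary obstruction then yields $\theta(f)=\delta(c_{f})$, where $\delta$ is the connecting map in the long exact sequence
\[
H^{1}(M;\ZZ)\xrightarrow{i^{\ast}}H^{1}(K;\ZZ)\xrightarrow{\delta}H^{2}(M,K;\ZZ).
\]
Exactness gives $\theta(f)=0$ iff $c_{f}\in\mathrm{im}(i^{\ast})$; dualizing via universal coefficients (valid since $H_{0}$ is free), this is precisely the statement that $f_{\ast}:H_{1}(K;\Z)\to\ZZ$ extends to $H_{1}(M;\Z)\to\ZZ$, i.e.\ condition $(3)$.

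For the ``moreover'' clause, the set of homotopy classes of extensions of $f$ is a torsor over $H^{1}(M,K;\ZZ)$ (again because $\pi_{2}(\SO(3))=0$ and there are no higher obstructions), and under the identifications above this torsor maps bijectively onto the set of lifts $\tilde c\in H^{1}(M;\ZZ)$ of $c_{f}$. Given a preassigned extension $\varphi:H_{1}(M;\Z)\to\ZZ$ of $f_{\ast}$, one chooses the torsor element corresponding to the lift $\tilde c$ dual to $\varphi$ and obtains an extension $F:M\to\SO(3)$ inducing $\varphi$. The main technical obstacle I foresee is verifying the naturality step $\theta(f)=\delta(c_{f})$ cleanly and ensuring that the torsor-to-lift identification is the tautological one; once these are in place the rest is standard cohomological bookkeeping.
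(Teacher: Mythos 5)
Your argument is correct and lives in the same general framework as the paper's (elementary obstruction theory plus the observation that homomorphisms into the abelian group $\pi_{1}(\SO(3))$ factor through $H_{1}$, which gives $(2)\Leftrightarrow(3)$ exactly as in the paper). The one genuine difference is which fact kills the top-dimensional obstruction: the paper invokes the theorem (Whitehead/Phillips) that an open orientable $3$-manifold is homotopy equivalent to a subcomplex of its $2$-skeleton, so that the extension problem is literally a problem over a $2$-complex and only the $\pi_{1}$-level obstruction can occur; you instead keep $M$ three-dimensional and observe that the obstruction over the $3$-cells lies in $H^{4}\bigl(M,K;\pi_{2}(\SO(3))\bigr)$-type groups that vanish because $\pi_{2}(\SO(3))=0$ and $\dim M=3$. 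Your route trades a nontrivial theorem about open $3$-manifolds for the elementary vanishing of $\pi_{2}$ of a Lie group, and it also forces you to be slightly more careful (as you note) in identifying the primary obstruction with $\delta(c_{f})$ via the first Postnikov section $\SO(3)\rightarrow K(\ZZ,1)$ and in checking that $\SO(3)$ is a simple space so that the obstruction classes are well defined --- both of which are standard and go through. You also give a more explicit justification of the ``moreover'' clause (the torsor of extensions over $H^{1}(M,K;\ZZ)$ surjecting onto the lifts of $c_{f}$) than the paper does; the paper leaves this to the cell-by-cell construction. Both proofs are sound; yours is marginally more self-contained, the paper's is shorter once the $2$-complex fact is granted.
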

\begin{proof}
 It is well known that an open orientable $3$-manifold is homotopy
 equivalent to a subcomplex of its $2$-skeleton (cf. \cite{Phillips},
 \cite{Whitehead} for example).  Therefore, by an
 elementary obstruction theory, the given map $f:K\rightarrow\SO (3)$ extends to
 a map $M\rightarrow\SO (3)$ if and only if the induced homomorphism
 $f_{\ast}:\pi_{1}(K)\rightarrow\pi_{1}(\SO (3))$ extends to a
 homomorphism $\pi_{1}(M)\rightarrow\pi_{1}(\SO (3))$, i.e., there is a
 homomorphism $\Phi:\pi_{1}(M)\rightarrow\pi_{1}(\SO (3))$ such that
 $\Phi\circ\iota_{\ast}=f_{\ast}$ where $\iota:K\hookrightarrow M$ is the
 inclusion.  Moreover, since $\pi_{1}(K)$ and $\pi_{1}(\SO (3))$ are
 Abelian it is equivalent to the condition that the homomorphism
 $f_{\ast}:H_{1}(K;\Z )\rightarrow H_{1}(\SO (3);\Z )$ extends to a
 homomorphism $H_{1}(M;\Z )\rightarrow H_{1}(\SO (3);\Z )$. 
\end{proof}

 Now, we show Claim \ref{claim_if}. 
\begin{proof}[Proof of Claim \ref{claim_if}]
 Suppose $f:K\rightarrow\SO(3)$ is a preferred $\sigma$-framing
 of $K$, i.e., the $\sigma$-framing associated with a preferred framing
 of $K$.  By Theorem \ref{correct_thm} and  Lemma \ref{extension_lem},
 it suffices to show that there exists a homomorphism $\Phi
 :H_{1}(M;\Z )\rightarrow H_{1}(\SO(3);\Z )$ which is an
 extension of the induced homomorphism $f_{\ast}:H_{1}(K;\Z )\rightarrow
 H_{1}(\SO(3);\Z )$.  If $f_{\ast}=0$ then the zero
 homomorphism is an extension.  Hence we assume $f_{\ast}\neq 0$,
 which implies $f_{\ast}$ is an epimorphism.  On the other hand, since
 $\kappa_{(2)}\neq 0$ the composition of the natural homomorphisms
 $H_{1}(K;\Z )\rightarrow H_{1}(M;\Z )\rightarrow H_{1}(M;\ZZ )$ is
 non-trivial.  Let $\langle\kappa_{(2)}\rangle$ denote its image.  Since
 $H_{1}(M;\ZZ )$ is a vector space over the field $\ZZ$, we have a
 projection onto the one-dimensional subspace $H_{1}(M;\ZZ
 )\rightarrow\langle\kappa_{(2)}\rangle$.  Let $\Psi$ denote the
 composition of the natural homomorphism $H_{1}(M;\Z )\rightarrow
 H_{1}(M;\ZZ )$ followed by this projection.  Let $\alpha
 :\langle\kappa_{(2)}\rangle\cong\ZZ$ and $\beta
 :H_{1}(\SO(3);\Z )\cong\ZZ$ be any isomorphisms.  Then the
 composition $\Phi =\beta^{-1}\circ\alpha\circ\Psi$ is the desired
 extension homomorphism. 
\end{proof}

\section{Lemmata and proof of Claim \ref{claim_onlyif}}\label{proof_claim_2}
In this section, we study some properties of $\sigma$-framings and
prove Claim \ref{claim_onlyif}.  The following lemma describes a
relation of ($\sigma$-)framings of two oriented knots which are homologous. 
\begin{lemma}\label{homologous_framings}
 Let $Z_{1}$ and $Z_{2}$ be oriented knots in $M$ and
 $\nu_{j}:S^{1}\times D^{2}\rightarrow N(Z_{j})$ their framings $(j=1, 2)$ .  
 Assume that there is a compact oriented surface $S$ in
 $M$ such that $\partial S=Z_{1}\sqcup (-Z_{2})$ and $S\cap\partial
 N(Z_{j})=\nu_{j}(S^{1}\times\{1\} )$, where $-Z_{2}$ denotes $Z_{2}$
 with the orientation reversed. Then the induced $\sigma$-framings
 $f_{\nu_{1}}$ and $f_{\nu_{2}}$ satisfy that
 $[f_{\nu_{1}}]=[f_{\nu_{2}}]\in\pi_{1}(\SO (3))$.  
\end{lemma}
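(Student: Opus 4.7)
The plan is to build from the surface $S$ a global frame map $F : S \to \SO(3)$ and to express the $\sigma$-framings on $\partial S$ as rotations of $F$ whose total winding is controlled by $\chi(S)$.

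First, since $S$ is a compact oriented surface with nonempty boundary, $TS$ is trivial as an oriented rank-$2$ bundle; pick an oriented orthonormal frame $(V_1, V_2)$ of $TS$. Let $V_3$ be the oriented unit normal to $S$ in $M$, determined by the orientations of $S$ and $M$. Then $(V_1, V_2, V_3)$ is an orthonormal frame of $TM|_S$ which, via $\Pi$, defines a map $F : S \to \SO(3)$.

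Next, I would compare $F|_{\partial S}$ with $\phi := f_\nu : \partial S \to \SO(3)$. The hypothesis $S \cap \partial N(Z_j) = \nu_j(S^1 \times \{1\})$ forces $S$ to contain the half-annulus $\nu_j(S^1 \times [0,1])$ near $Z_j$, so the framing direction $v_2$ is tangent to $S$ on $Z_j$. Hence $(v_1, v_2) \subset T_pS$ and $v_3 = \pm V_3$ on all of $\partial S$. The boundary convention $\partial S = Z_1 \sqcup (-Z_2)$ makes $(v_1, v_2)$ positively oriented in $T_pS$ on $Z_1$ with $v_3 = V_3$, but negatively oriented on $Z_2$ with $v_3 = -V_3$. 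Consequently the change-of-basis map $M := F^{-1}\phi : \partial S \to \SO(3)$ takes values in $\SO(2) \subset \SO(3)$ (the stabilizer of $V_3$) on $Z_1$, and in the coset $J'\SO(2)$ on $Z_2$, where $J' = \mathrm{diag}(1,-1,-1)$. Both loci are circles in $\SO(3)$ whose inclusions realize the generator of $\pi_1(\SO(3)) \cong \ZZ$ (the inclusion $\SO(2) \hookrightarrow \SO(3)$ is the mod-$2$ reduction $\Z \to \ZZ$ on $\pi_1$), so $[M|_{Z_j}] \equiv w_j \pmod 2$, where $w_j$ is the winding of the rotation-angle of $M|_{Z_j}$ measured using $Z_j$'s orientation.

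Finally, I would apply Poincar\'e--Hopf to the nowhere-vanishing field $V_1$ on $S$: the total rotation of the boundary tangent of $\partial S$ relative to $V_1$, summed in boundary orientation, equals $\chi(S)$. Translating to $Z_j$-orientations (the boundary orientation on $Z_2$ is reversed from $Z_2$'s own) gives $w_1 - w_2 = \chi(S)$, which is even: for $S$ connected of genus $g$, $\chi(S) = -2g$, and the analogous parity check handles the disconnected case. Meanwhile, $F$ is defined on all of $S$, so $F_\ast[\partial S] = 0$ in $H_1(\SO(3); \Z)$ and hence $[F|_{Z_1}] = [F|_{Z_2}]$ in $\pi_1(\SO(3))$. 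Since $\SO(3)$ is a topological group and $\phi = F \cdot M$ pointwise, we have $[\phi|_{Z_j}] = [F|_{Z_j}] + [M|_{Z_j}]$ in the abelian group $\pi_1(\SO(3))$; subtracting gives $[f_{\nu_1}] - [f_{\nu_2}] \equiv w_1 - w_2 \equiv \chi(S) \equiv 0 \pmod 2$.

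The main obstacle, I expect, is the orientation bookkeeping --- in particular, verifying the sign flip $v_3 = -V_3$ on $Z_2$ and translating the Poincar\'e--Hopf count (naturally in boundary orientation) back to the intrinsic orientations of $Z_1$ and $Z_2$, so that the difference $w_1 - w_2$ (rather than the sum) appears. Once that is set up, evenness of $\chi(S)$ (automatic because $\partial S$ has exactly two components) closes the argument.
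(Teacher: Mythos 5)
Your argument is correct and is essentially the paper's proof: both build a frame field $F=(v_1,v_2,v^{\bot})$ on $S$ from a trivialization of $TS$ plus the unit normal, observe that $F|_{Z_1}$ and $F|_{Z_2}$ are bordant via $F$, and absorb the discrepancy between $F|_{\partial S}$ and the $\sigma$-framings into a relative rotation number equal to $\chi(S)$, which is even. The only (cosmetic) difference is that the paper chooses $v_1$ to restrict to the tangent field of $Z_1$, so that $F|_{Z_1}=f_{\nu_1}$ exactly and all the winding is pushed onto $Z_2$, whereas you take an arbitrary frame and track the correction $F^{-1}\phi$ on both boundary components using the group structure of $\pi_1(\SO(3))$.
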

\begin{proof}[Proof of Lemma \ref{homologous_framings}]
 We define a map $F:S\rightarrow \SO(3)$ as follows.  Choose a unit
 tangent vector field $v_{1}:S\rightarrow TS\subset TM|S$ such that
 $v_{1}|Z_{1}$ coincides with the unit vector field tangent to $Z_{1}$.
 Then we choose another vector field $v_{2}$ so that $(v_{1}, v_{2})$
 forms an orthonormal frame field of $S$.  Here, $v_{2}$ is chosen to be
 inward normal along $Z_{1}$.  Picking the normal unit vector field
 $v^{\bot}$ to $S$, we have a frame field $F=(v_{1}, v_{2},
 v^{\bot}):S\rightarrow\SO(3)$.  By the definition,
 $F|Z_{1}=f_{\nu_{1}}$.  Since the rotation number of $v_{1}|Z_{2}$
 along $Z_{2}$ is equal to the Euler characteristic $\chi (S)$ which is
 the minus twice the genus of $S$, we have
 $[F|Z_{2}]=[f_{\nu_{2}}]\in\pi_{1}(\SO(3))$.  Since $F|Z_{1}$ and
 $F|Z_{2}$ are homologous (or bordant) by $F$, we have
 $[f_{\nu_{1}}]=[F|Z_{1}]=[F|Z_{2}]=[f_{\nu_{2}}]$.  
\end{proof}

Next, we study an oriented knot whose homology class with $\ZZ$
coefficient is zero.   
First, we consider the ``double'' of a knot and study its framing.  
Let $J$ be any oriented knot in $M$ and
$\nu :S^{1}\times D^{2}\rightarrow N(J)$ any framing of $J$.  Let
$J_{d}$ be the $(2, 1)$-cable knot in $N(J)$.  For the clarity, we define
$J_{d}$ as follows.  Define $\tilde{L}$ to be a union of two parallel
lines in $\R\times D^{2}$ as 
\[
 \tilde{L}:=\{ (t, \pm\tfrac{1}{2}) |\ t\in\R, \}
\]
and $\tau$ to be a self-diffeomorphism on $\R\times D^{2}$ by $\tau (t,
z):=(t, \exp (\pi t\sqrt{-1})z)$.  Then the quotient of $(\R\times
D^{2}, \tau (\tilde{L}))$ by the $\Z$-action generated by the translation by $1$
on the $\R$-factor is a manifold pair $(S^{1}\times D^{2}, L)$ such that
$L$ is an oriented knot. 
Here, we identify $S^{1}=\R /\Z$.  
We define $J_{d}$ to be $\nu (L)$ and call it a
$(2, 1)$-{\em cable knot} of $J$ with respect to $\nu$.  

To define a natural framing of a $(2, 1)$-cable knot $J_{d}$ of $J$, we
consider an annulus in $S^{1}\times D^{2}$ defined as follows.  Let
$\tilde{A}$ be a union of two strips in $\R\times D^{2}$ defined by 
\[
 \tilde{A}:=\{ (t, \pm r) |\ t\in\R, \tfrac{1}{2}\leq r\leq 1\}
\]
and set $(S^{1}\times D^{2}, A):=(\R\times D^{2}, \tau (\tilde{A}))/\Z$
as the quotient by the translation.  Note that $\partial A-\partial
(S^{1}\times D^{2})=L$.  For any small tubular neighborhood $N(L)$ in
$\Int (S^{1}\times D^{2})$, there is a framing $\xi :S^{1}\times
D^{2}\rightarrow N(L)\subset S^{1}\times D^{2}$ of $L$ such that $\xi
(S^{1}\times\{ 1\})=\partial N(L)\cap A$.  We call the framing
$\nu\circ\xi :S^{1}\times D^{2}\rightarrow N(J_{d})=\nu (N(L))\subset
N(J)$ a {\em revolution framing} of the $(2, 1)$-cable knot $J_{d}(=\nu
(L))$ of $J$ with respect to $\nu$.  See Figure~\ref{fig_RevFraming}.  
\begin{figure}[h]
	\centering
	\includegraphics[height=6cm]{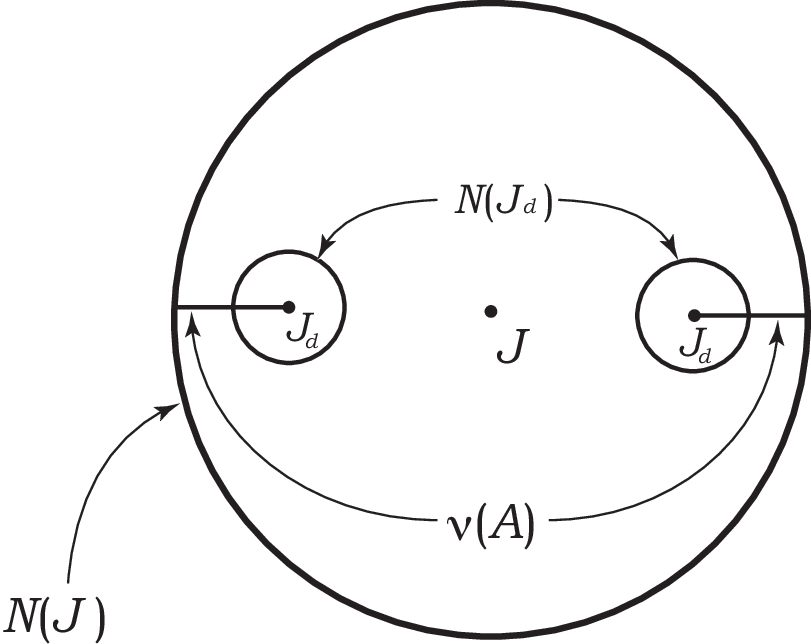}
	\caption{A revolution framing of $J_{d}$ at a section}
	\label{fig_RevFraming}
\end{figure}
Under the parallelization $\Pi$ of $M$, the revolution framing of
$J_{d}$ induces a {\em revolution $\sigma$-framing}
$J_{d}\rightarrow\SO(3)$.  

The following is a key lemma to the proof of Claim \ref{claim_onlyif}.  
\begin{lemma}\label{(2,1)-framing}
 For any oriented knot $J$ in $M$ and any framing $\nu :S^{1}\times
 D^{2}\rightarrow N(J)$, the revolution $\sigma$-framing
 $f_{d}:J_{d}\rightarrow\SO(3)$ of the $(2,1)$-cable knot
 $J_{d}$ with respect to $\nu$ is not null-homotopic, i.e.,
 $[f_{d}]=1\in\ZZ\cong\pi_{1}(\SO(3))$.  
\end{lemma}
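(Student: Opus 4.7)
The plan is to reduce the claim to an explicit calculation inside the tubular neighborhood $N(J)$, by separating off the contribution of the ambient parallelization $\Pi$.

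First, I would use $\nu$ to identify $N(J)$ with $S^{1}\times D^{2}$, so that $J_{d}$ corresponds to $L$ and the revolution framing $\nu\circ\xi$ corresponds to $\xi$. The canonical trivialization $\tilde{\Pi}$ of $T(S^{1}\times D^{2})$ by $\partial_{s},\partial_{x},\partial_{y}$ yields a $\sigma$-framing $g_{\xi}\colon L\to\SO(3)$ defined as in Section~\ref{proof_claim_1} but with $\tilde{\Pi}$ in place of $\Pi$. The two trivializations differ on $N(J)$ by a gauge transformation $\phi\colon N(J)\to\SO(3)$, and by construction $f_{d}(p)=\phi(p)\cdot g_{\xi}(p)$ at each $p\in L$. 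Since $\pi_{1}(\SO(3))\cong\ZZ$ is abelian, this yields $[f_{d}]=[\phi|_{L}]+[g_{\xi}]$ in $\pi_{1}(\SO(3))$.

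Next, I would argue that $[\phi|_{L}]=0$. Since $N(J)$ is homotopy equivalent to $S^{1}$, the induced map $\phi_{\ast}\colon H_{1}(N(J);\Z)\to H_{1}(\SO(3);\Z)\cong\ZZ$ is determined by $\phi_{\ast}([J])$. By the very definition of the $(2,1)$-cable, $L$ winds twice around the longitudinal generator of $H_{1}(N(J);\Z)$, so $[L]=2[J]$, and hence $[\phi|_{L}]=2\phi_{\ast}([J])=0$ in $\ZZ$. Thus $[f_{d}]=[g_{\xi}]$, and it suffices to prove $[g_{\xi}]\ne 0$.

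Finally, I would perform the model computation in $S^{1}\times D^{2}$. Parametrize $L$ by $t\in\R/2\Z$ as $L(t)=(t\bmod 1,\tfrac{1}{2}\exp(\pi t\sqrt{-1}))$, so that the unit tangent $v_{1}(t)$, the radial direction $v_{2}(t)=(0,\cos\pi t,\sin\pi t)$ pointing outward along the annulus $A$, and $v_{3}(t)=v_{1}(t)\times v_{2}(t)$ together form $g_{\xi}(t)$. Let $R_{s}(\theta)\in\SO(3)$ denote rotation of angle $\theta$ around the $\partial_{s}$-axis. A short column-by-column check shows that $R_{s}(-\pi t)\cdot g_{\xi}(t)$ is independent of $t$; equivalently, $g_{\xi}(t)=R_{s}(\pi t)\cdot C$ for some fixed $C\in\SO(3)$. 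As $t$ traverses $[0,2]$, the factor $R_{s}(\pi t)$ traces a full $2\pi$-rotation around $\partial_{s}$, which represents the non-trivial class of $\pi_{1}(\SO(3))$; right-multiplication by the constant $C$ does not affect homotopy classes. Hence $[g_{\xi}]\ne 0$, as required.

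The principal delicacy is bookkeeping the two ``factors of two'': the double longitudinal wrapping of $J_{d}$ around $J$ kills the ambient $\phi$-contribution in the second step, while the combination of that same double wrap with the single meridional revolution of $\xi$ produces exactly a $2\pi$ (not $\pi$ or $4\pi$) net twist in the third.
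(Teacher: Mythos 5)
Your proof is correct and rests on the same count as the paper's own one-line argument, which simply asserts $[f_{d}]=2[f_{\nu}]+1\equiv 1\in\pi_{1}(\SO(3))$: the double longitudinal wrap contributes an even (hence trivial mod $2$) amount while the single meridional revolution contributes the generator. Your gauge splitting $f_{d}=\phi\cdot g_{\xi}$ with $[\phi|_{L}]=2\phi_{\ast}([J])=0$, together with the explicit computation $g_{\xi}(t)=R_{s}(\pi t)\cdot C$, is a rigorous implementation of exactly that count, supplying the details the paper elides.
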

\begin{proof}
 Let $f_{\nu}:J\rightarrow\SO(3)$ be the $\sigma$-framing of $J$
 with respect to $\nu$.  Then along $J_{d}$ the frame $f_{d}(p)$ goes
 around twice in the longitudinal direction and rotates once in the
 meridional direction.  Thus $[f_{d}]=2[f_{\nu}]+1\equiv
 1\in\ZZ\cong\pi_{1}(\SO(3))$.  
\end{proof}

Next, suppose that $K$ is an oriented knot $K$ which represents the
null-class in $H_{1}^{\infty}(M;\Z )$. Recall that $\kappa_{(2)}\in
H_{1}(M;\ZZ )$ denotes the $\ZZ$-reduction of the homology class $\kappa
=\iota_{\ast}([K])\in H_{1}(M;\Z )$.  Let $\lambda$ be the homology class in
$H_{1}(M\setminus\Int N(K);\Z )$ represented
by a preferred longitude of $N(K)$.  Then we have the following. 
\begin{lemma}\label{longitude_null}
 If $\kappa_{(2)}=0$, then the $\ZZ$-reduction $\lambda_{(2)}$ of
 $\lambda$ is zero.  
\end{lemma}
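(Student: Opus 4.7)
The plan is to combine a compact mod-2 surface cobounding $K$ (provided by the hypothesis $\kappa_{(2)}=0$) with the preferred locally finite surface bounded by $\ell$. Writing $X:=M\setminus\Int N(K)$ and letting $m$ denote the meridian of $K$, I will first obtain $\lambda_{(2)}=a[m]_{(2)}$ in $H_{1}(X;\ZZ)$ for some $a\in\ZZ$, and then force $a\equiv 0\pmod 2$ via a mod-2 intersection computation in locally finite homology.

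Since $\kappa_{(2)}=0$, there is a compact (possibly non-orientable) 2-chain $C$ in $M$ with $\partial C=K$ mod 2; arrange $C$ transverse to $\partial N(K)$. Then $C':=C\cap\partial N(K)$ is a mod-2 1-cycle on $T^{2}:=\partial N(K)$. Because the piece $C\cap N(K)$ has mod-2 boundary $K+C'$ in $N(K)$, we get $[C']=[K]$ in $H_{1}(N(K);\ZZ)\cong\ZZ$, so in the basis $\{m,\ell\}$ of $H_{1}(T^{2};\ZZ)$ we can write $[C']=am+\ell$ for some $a\in\ZZ$. The complementary piece $C\cap X$ is a compact mod-2 2-chain in $X$ with boundary $C'$, so pushing into $H_{1}(X;\ZZ)$ yields $\lambda_{(2)}+a[m]_{(2)}=0$, i.e.\ $\lambda_{(2)}=a[m]_{(2)}$.

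Because the framing $\nu$ is preferred, $\ell$ bounds a locally finite oriented surface $S$ in $X$, so $\lambda_{(2)}=0$ in $H_{1}^{\infty}(X;\ZZ)$; mapping the previous identity forward yields $a[m]_{(2)}=0$ in $H_{1}^{\infty}(X;\ZZ)$. The heart of the proof is to verify that $[m]_{(2)}$ is nonzero in $H_{1}^{\infty}(X;\ZZ)$. Suppose instead that $m=\partial W$ mod 2 for some locally finite 2-chain $W\subset X$. Capping $W$ off with the compact meridian disk $D\subset N(K)$ (with $\partial D=m$) produces a closed locally finite mod-2 2-cycle $Y:=W+D$ in $M$. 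A transverse computation gives $Y\cdot K=W\cdot K+D\cdot K=0+1=1$ mod 2, since $W\subset X$ is disjoint from $K$ while $D$ meets $K$ transversely in a single point. On the other hand, $K=\partial C$ mod 2 with $C$ compact, so by the adjunction of boundary and intersection $Y\cdot K=Y\cdot\partial C=\partial Y\cdot C=0$, contradicting the previous computation.

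The main obstacle is this last step, the non-vanishing of $[m]_{(2)}$ in locally finite homology; this is where the full strength of the assumption $\kappa_{(2)}=0$ is used, as the compact null-cobordism $C$ of $K$ is precisely what lets the mod-2 intersection argument produce a contradiction. Combining $a[m]_{(2)}=0$ in $H_{1}^{\infty}(X;\ZZ)$ with $[m]_{(2)}\neq 0$ there forces $a\equiv 0\pmod 2$, whence $\lambda_{(2)}=a[m]_{(2)}=0$ in $H_{1}(X;\ZZ)$, as desired.
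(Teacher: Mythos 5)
Your proof is correct, and while its first half is essentially the paper's argument in chain-level clothing, the way you exclude the meridian is genuinely different. Both you and the paper first show that $\lambda_{(2)}$ lies in the $\ZZ$-span of the meridian class $[m]_{(2)}$ in $H_{1}(E_{M}(K);\ZZ)$: the paper does this with the long exact sequence of the pair $(M,E_{M}(K))$ plus the excision computation $H_{2}(M,E_{M}(K);\ZZ)\cong\ZZ$ generated by the meridian disk, whereas you do it by intersecting an explicit compact mod~$2$ surface $C$ with $\partial C=K$ (which exists because $\kappa_{(2)}=0$) with the torus $\partial N(K)$ --- the same computation, made concrete. The divergence is in ruling out $\lambda_{(2)}=[m]_{(2)}$. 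The paper pairs both classes against the locally finite relative $2$-cycle in $(E_{M}(K),\partial E_{M}(K))$ bounded by the preferred longitude, getting intersection numbers $0$ and $1$. You instead pass to $H_{1}^{\infty}(E_{M}(K);\ZZ)$, where preferredness kills $\lambda_{(2)}$, and then show $[m]_{(2)}$ survives there by capping a hypothetical locally finite null-cobordism $W$ of $m$ with the meridian disk and pairing the closed locally finite $2$-cycle $W+D$ against $K$ itself, using $K=\partial C$ bmod~$2$ to force the pairing to vanish and contradict $D\cdot K=1$. Your route uses the hypothesis $\kappa_{(2)}=0$ a second time (through $C$) where the paper relies only on the preferred framing at that stage; in exchange, your intersection computation lives in the boundaryless open manifold $M$ and pairs against the transverse cycle $K$, which sidesteps the mild delicacy in the paper's step of pairing a curve lying on $\partial E_{M}(K)$ with a relative cycle whose boundary is that very curve. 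Both arguments are at the same (standard, slightly informal) level of rigor concerning transversality of mod~$2$ chains.
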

\begin{proof}
 Let $E_{M}(K)$ denote the knot exterior $M\setminus\Int N(K)$.
 Consider the following diagram. 
\[
 \begin{array}{cccccccc}
 \rightarrow & H_{2}(M, E_{M}(K);\Z ) &
  \stackrel{\partial}{\rightarrow} & H_{1}(E_{M}(K);\Z ) &
  \stackrel{\iota_{\ast}}{\rightarrow} 
  & H_{1}(M;\Z ) & \rightarrow & 0\\
  & \downarrow & & \downarrow & & \downarrow & & \\
 \rightarrow & H_{2}(M, E_{M}(K);\ZZ ) &
  \stackrel{\partial}{\rightarrow} & H_{1}(E_{M}(K);\ZZ ) &
  \stackrel{\iota_{\ast}}{\rightarrow} & H_{1}(M;\ZZ ) & \rightarrow & 0 
 \end{array}
\]
 Here, the rows are homology exact sequences of the pair $(M, E_{M}(K))$ and
 the vertical arrows are natural homomorphisms.  Then
 $\iota_{\ast}(\lambda)=\kappa , \ 
 \iota_{\ast}(\lambda_{(2)})=\kappa_{(2)}$ and $\lambda ,\kappa$ are mapped down
 to $\lambda_{(2)}, \kappa_{(2)}$ respectively.  Since $\kappa_{(2)}=0$,
 there is $\eta\in H_{2}(M, E_{M}(K);\ZZ )$ such that $\partial\eta
 =\lambda_{(2)}$.  Note that $H_{2}(M, E_{M}(K);\ZZ )$ is isomorphic to
 $\ZZ$ generated by the meridian disk of $N(K)$.  Hence $\partial\eta$ is
 represented by the meridian loop or equal to zero.  However, the
 meridian loop intersects exactly once with a (locally finite) relative cycle in
 $(E_{M}(K), \partial E_{M}(K))$ bounded by the preferred longitude of
 $N(K)$, the representative cycle of $\lambda$.  Thus $\partial\eta
 =\lambda_{(2)}$ must be zero.  
\end{proof}

The following is another key lemma which describes a normal form of the
knot which satisfies the hypothesis of Claim \ref{claim_onlyif}.  
\begin{lemma}\label{transportation_lemma}
 Suppose that an oriented knot $K$ represents the null-class in
 $H_{1}^{\infty}(M;\Z )$ and $\kappa_{(2)}=0$.  We fix a preferred
 framing $\nu :S^{1}\times D^{2}\rightarrow N(K)$.  Then, there
 exists an oriented knot $Z\subset M$ such that the $(2, 1)$-cable knot
 $Z_{d}$ of $Z$ is homologous to $K$ and
 $[f_{d}]=[f_{\nu}]\in\pi_{1}(\SO(3))$, where
 $f_{d}:Z_{d}\rightarrow\SO(3)$ is the revolution
 $\sigma$-framing of $Z_{d}$ with respect to some framing of $Z$.  
\end{lemma}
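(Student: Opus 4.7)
The plan is to realize $Z$ from the $2$-divisibility of $\lambda$ provided by Lemma \ref{longitude_null}, and then to match $\sigma$-framings via Lemma \ref{homologous_framings} applied to a surface obtained from a homological calculation. Lemma \ref{longitude_null} gives $\lambda_{(2)}=0$, so write $\lambda=2\mu$ for some $\mu\in H_{1}(E_{M}(K);\Z)$ and realize $\mu$ by an embedded oriented knot $Z\subset E_{M}(K)$ (any integer $H_{1}$-class in an orientable 3-manifold is represented by an embedded oriented 1-submanifold, whose components can be tubed into a single knot). Fix a framing $\nu_{Z}$ of $Z$ and form the $(2,1)$-cable $Z_{d}\subset N(Z)$; then $[Z_{d}]=2[Z]=\lambda$ in $H_{1}(E_{M}(K);\Z)$ maps to $\kappa$ in $H_{1}(M;\Z)$, so $Z_{d}$ is homologous to $K$, giving the first assertion of the lemma.

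Set $F:=M\setminus(\Int N(K)\cup\Int N(Z_{d}))$. I will construct a compact oriented surface $S\subset F$ with $\partial S=\ell_{K}\sqcup(-Z_{d}')$, where $Z_{d}'$ is a longitude of $Z_{d}$ differing from the revolution longitude $\ell_{Z_{d}}^{rev}$ by an \emph{even} number of meridional twists; since $\pi_{1}(\SO(2))\to\pi_{1}(\SO(3))=\ZZ$ is reduction modulo $2$, such a $Z_{d}'$ determines a framing of $Z_{d}$ whose $\sigma$-framing class in $\pi_{1}(\SO(3))$ coincides with the revolution one. Consequently Lemma \ref{homologous_framings} applied to $S$ will yield $[f_{\nu}]=[f_{Z_{d}'}]=[f_{d}]$.

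Define $a\in\Z$ by $[\ell_{K}]-2[\ell_{Z}]=a[m_{Z}]$ in $H_{1}(E_{M}(K\cup Z);\Z)$; this is well-defined because the difference lies in the kernel of the inclusion-induced map to $H_{1}(E_{M}(K);\Z)$, which is generated by $[m_{Z}]$. A direct computation in $H_{1}(N(Z)\setminus\Int N(Z_{d});\Z)$ yields $[m_{Z}]=2[m_{Z_{d}}]$ (from the meridional disk of $Z$, doubly punctured by $Z_{d}$) and $[\ell_{Z_{d}}^{rev}]=2[\ell_{Z}]+2\epsilon[m_{Z_{d}}]$ for some fixed $\epsilon\in\{\pm 1\}$ (from the revolution annulus of the paper). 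Including these relations into $H_{1}(F;\Z)$,
\[
 [\ell_{K}]-[\ell_{Z_{d}}^{rev}]=2(a-\epsilon)[m_{Z_{d}}].
\]
Take $Z_{d}'$ to be the longitude obtained from $\ell_{Z_{d}}^{rev}$ by $2(a-\epsilon)$ meridional twists; then $[Z_{d}']=[\ell_{K}]$ in $H_{1}(F;\Z)$, so a standard 3-manifold argument produces a compact oriented surface $S\subset F$ with the required boundary. Applying Lemma \ref{homologous_framings} then gives $[f_{\nu}]=[f_{d}]$, completing the proof. The main obstacle is the $H_{1}$-computation in $N(Z)\setminus\Int N(Z_{d})$ showing that $[\ell_{K}]-[\ell_{Z_{d}}^{rev}]$ is automatically an \emph{even} multiple of $[m_{Z_{d}}]$; this evenness is precisely what makes the required longitude adjustment invisible at the level of $\sigma$-framing classes.
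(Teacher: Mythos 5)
Your argument is correct in substance, but it takes a genuinely different route from the paper's. The paper uses $\lambda=2\zeta$ to produce an \emph{immersed} surface in $E_{M}(K)$ bounded by the preferred longitude $L$ and a double cover of $Z$, and then argues case by case on whether this surface meets $\partial N(Z)$ in a $(2,1)$-curve or in two $(1,0)$-curves; in the latter case it performs an explicit double curve surgery on $(A\cup(-D))-\Int N(Z_{d})$ inside $N(Z)$ to replace the two $(1,0)$-curves by a longitude of $Z_{d}$ equal to the revolution longitude with exactly two extra meridional twists, the twists coming from the two intersection points of the meridian disk $D$ with $Z_{d}$. You replace this construction by homological bookkeeping: the identities $[m_{Z}]=2[m_{Z_{d}}]$ and $[\ell^{rev}_{Z_{d}}]=2[\ell_{Z}]+2\epsilon[m_{Z_{d}}]$ in the cable space, combined with $[\ell_{K}]-2[\ell_{Z}]=a[m_{Z}]$, show that $[\ell_{K}]-[\ell^{rev}_{Z_{d}}]$ is an even multiple of $[m_{Z_{d}}]$ in $H_{1}(F;\Z)$ --- the same parity phenomenon that the paper extracts geometrically (your factor $2$ in $[m_{Z}]=2[m_{Z_{d}}]$ plays the role of the paper's ``two intersection points''). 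Your route avoids the immersed surface and the case analysis, at the cost of invoking the existence of a compact embedded oriented surface in the noncompact manifold $F$ with boundary exactly $\ell_{K}\sqcup(-Z_{d}')$; this is indeed standard (pass to a compact codimension-$0$ piece $F_{0}$ containing a bounding $2$-chain and with $\partial N(K)\cup\partial N(Z_{d})\subset\partial F_{0}$, then use Poincar\'e--Lefschetz duality and a map to $S^{1}$), but it deserves a sentence since the paper deliberately builds the surface by hand instead. Two further small points to tighten: the integer $a$ need not be unique if $[m_{Z}]$ is torsion in $H_{1}(E_{M}(K\cup Z);\Z)$, though only existence is needed for your conclusion; and to apply Lemma \ref{homologous_framings} literally you should extend $S$ by annuli inside $N(K)$ and $N(Z_{d})$ to a surface bounded by $K$ and $-Z_{d}$ themselves, as the paper does.
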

\begin{proof}
 Let $L$ denote the preferred longitude of $N(K)$ with respect to
 $\nu$ and $\lambda$ the homology class in $H_{1}(E_{M}(K);\Z )$
 represented by $L$.  Then we have $\lambda_{(2)}=0$ by Lemma
 \ref{longitude_null} using the assumption $\kappa_{(2)}=0$.  
 Considering the Bockstein homology exact sequence with respect to
 $0\rightarrow\Z\stackrel{\times
 2}{\rightarrow}\Z\rightarrow\ZZ\rightarrow 0$, we have $\lambda =2\zeta$
 for some $\zeta\in H_{1}(E_{M}(K);\Z )$.  Choose a
 representative cycle (an oriented knot) $Z\subset E_{M}(K)$ of $\zeta$ and a
 framing of $Z$.  Since $\lambda =2\zeta$, there exists an immersed
 oriented surface bounded by $L$ and ``twice of $-Z$''.  Precisely, there
 exists an immersion $h$ of compact oriented surface $S$ into $E_{M}(K)$
 which maps $\Int S$ into $\Int E_{M}(K)$ with the
 following properties.  The boundary $\partial S$ is decomposed 
 into two parts: $\partial_{+}S\sqcup\partial_{-}S$, where
 $h(\partial_{+}S)=L$ and $h(\partial_{-}S)=-Z$ which means
 $h|\partial_{-}S$ is orientation-reversing.  The immersion $h$ is an
 embedding away from $\partial_{-}S$ and $h|\partial_{-}S$ is a
 (possibly trivial) two-fold covering onto $Z$.  Moreover, $h(S)\cap
 N(Z)$ is homeomorphic (in fact diffeomorphic away from $Z$) to the
 mapping cylinder of the two-fold covering map $h|\partial_{-}S$.  We
 may assume that $h(S)$ and $\partial N(Z)$ are transverse to each other
 and $h(S)\cap\partial N(Z)$ is a circle or a union of two parallel
 circles in $\partial N(Z)$.  By choosing another framing of $N(Z)$ if
 necessary, we may assume that $h(S)\cap\partial N(Z)$ is the $(2,
 1)$-curve or a union of two $(1, 0)$-curves with respect to the chosen
 framing of $N(Z)$ restricted to $\partial N(Z)$.  

 If $h(S)\cap\partial N(Z)$ is the $(2, 1)$-curve, then attaching $A$ to
 $h(S)-\Int N(Z)$ along $h(S)\cap\partial N(Z)$ we have an
 embedded surface in $E_{M}(K)$ bounded by $L$ and $-Z_{d}$.  Here, $A$
 is the annulus in $N(Z)$ bounded by a $(2, 1)$-curve in
 $\partial N(Z)$ and $Z_{d}$, which is defined in the definition of the 
 revolution framing of $Z_{d}$, and the orientation of $A$ is determined 
 by $-Z_{d}$.  Since $L$ is isotopic to $K$ in $N(K)$, we have a compact 
 oriented surface in $M$ bounded by $K$ and $-Z_{d}$.  It follows that $K$ and
 $Z_{d}$ are homologous and $[f_{\nu}]=[f_{d}]$ by Lemma
 \ref{homologous_framings}.  

 In the case that $h(S)\cap\partial N(Z)$ is two $(1, 0)$-curves, we
 modify $h(S)$ in $N(Z)$ as follows.  Consider the concentric tubular
 neighborhood $N_{1/2}(Z)\subset N(Z)$ where meridian disks are of
 radius $\frac{1}{2}$ of the meridian disks of $N(Z)$.  We set a $(2,
 1)$-cable knot $Z_{d}$ on $\partial N_{1/2}(Z)$ with respect to the
 framing of $N(Z)$ and we will construct a compact oriented surface $B$
 in $N(Z)$ bounded by $h(S)\cap\partial N(Z)$ and another longitude of
 $N(Z_{d})$.  

\begin{figure}[h]
	\centering
	\includegraphics[height=6cm]{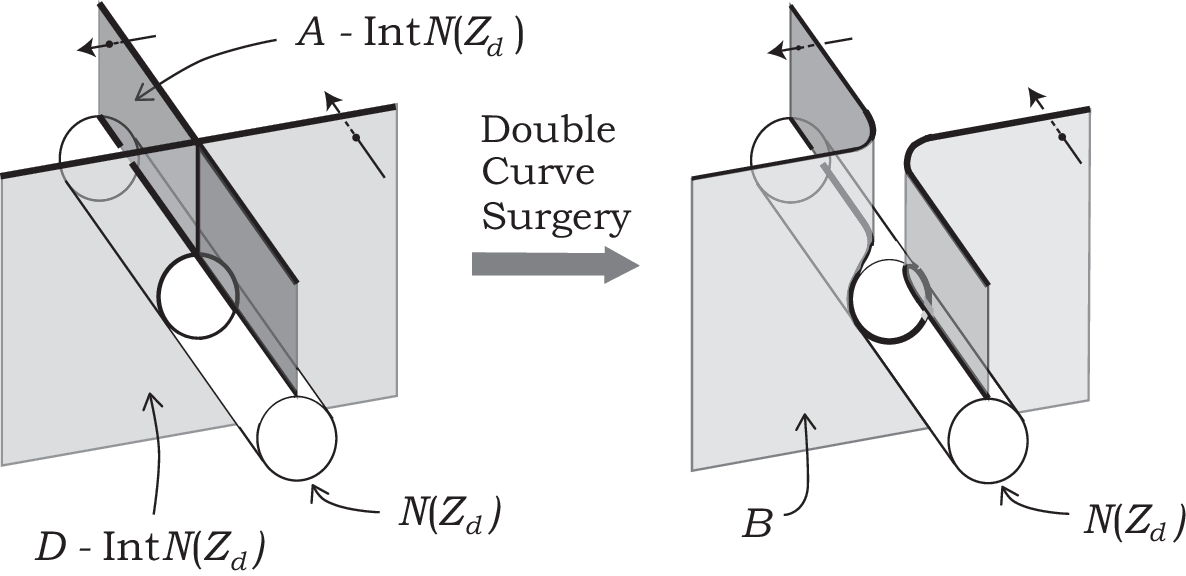}
	\caption{Double curve surgery on $(A\cup (-D))-\Int N(Z_{d})$}
	\label{fig_DCS}
\end{figure}

 First, let $D$ be a meridian disk of $N(Z)$ with the orientation
 induced from $S^{1}\times D^{2}$ by the framing and $A$ the annulus in $N(Z)$
 which determines the revolution framing of $Z_{d}$ as above.  Fix a
 small tubular neighborhood $N(Z_{d})$ of $Z_{d}$.  Then performing a
 double curve surgery on $(A\cup (-D))-\Int N(Z_{d})$, we obtain an
 oriented surface $B$ (see Figure~\ref{fig_DCS}).  By the construction,
 $B\cap\partial N(Z)$ is a union of two $(1, 0)$-curves and
 $B\cap\partial N(Z_{d})$ is the longitude of the revolution framing
 with two twists corresponding to two intersection points between $D$
 and $Z_{d}$.  Since on $\partial N(Z)$ two curves $h(S)\cap\partial N(Z)$
 and $B\cap\partial N(Z)$ are isotopic, we can attach $B$ to 
 $h(S)-\Int N(Z)$ along $h(S)\cap\partial N(Z)$.  Let $\Sigma$
 denote the resulting surface: $\Sigma = (h(S)-\Int N(Z))\cup B$.
 Then $\Sigma$ is a compact
 oriented proper surface in $E_{M}(K)\setminus\Int N(Z_{d})$ and
 $\partial\Sigma = L\sqcup (-B\cap \partial N(Z_{d}))$. 
 Since $L$ (resp. $B\cap \partial N(Z_{d})$) is isotopic to $K$ (resp. $Z_{d}$)
 in $N(K)$ (resp. $N(Z_{d})$), $K$ and $Z_{d}$ are homologous.  
 On the other hand, the framing
 $\xi :S^{1}\times D^{2}\rightarrow N(Z_{d})$ such that
 $\xi (S^{1}\times\{ 1\})=-\Sigma\cap\partial N(Z_{d})(=-B\cap\partial
 N(Z_{d}))$ determines a 
 $\sigma$-framing $f_{\xi}:Z_{d}\rightarrow\SO(3)$.  By Lemma
 \ref{homologous_framings}, we have $[f_{\nu}]=[f_{\xi}]$.  Moreover, as
 noted above, the longitude with respect to $\xi$ is the longitude of the
 revolution framing with two meridional twists.  Hence we have
 $[f_{\xi}]=[f_{d}]$.  Consequently, we have $[f_{d}]=[f_{\nu}]$.  
\end{proof}

Now we can prove Claim \ref{claim_onlyif}.  
\begin{proof}[Proof of Claim \ref{claim_onlyif}]
 Suppose that $f:K\rightarrow\SO(3)$ is any preferred
 $\sigma$-framing of $K$.  In view of Theorem \ref{correct_thm} and
 Lemma \ref{extension_lem}, we only have to show that the induced
 homomorphism $f_{\ast}:H_{1}(K;\Z )\rightarrow H_{1}(\SO(3);\Z
 )$ never extends to $H_{1}(M;\Z )$.  On the contrary to the
 conclusion, we assume that there is a homomorphism $\Phi :H_{1}(M;\Z
 )\rightarrow H_{1}(\SO(3);\Z )$ such that
 $\Phi\circ\iota_{\ast}=f_{\ast}$,  where $\iota :K\hookrightarrow M$
 denotes the inclusion.  Since $\kappa_{(2)}=0$, we have $\Phi (\kappa
 )=0$.  On the other hand, by Lemma \ref{(2,1)-framing} and
 \ref{transportation_lemma}, $f_{\ast}([K])=[f]=1$.  Therefore, we have 
 $0=\Phi (\kappa )=\Phi\circ\iota_{\ast}([K])=f_{\ast}([K])=1$, a
 contradiction. 
\end{proof}

\section{Appendix: Proofs of Theorem \ref{correct_thm} and \ref{thm_B}}
\label{appendix}
In order to prove Theorem \ref{correct_thm} and \ref{thm_B}, we review
Phillips' submersion classification theory \cite{Phillips}.  Let $X$ and
$Y$ be manifolds.  We assume that $\mathrm{dim}X\geq\mathrm{dim}Y$ in
the following.  The space of all submersions from $X$ to $Y$ is
denoted by $\mathrm{Sbm}(X, Y)$ and the space of all vector bundle
morphisms from $TX$ to $TY$ whose restriction to each fiber the
maximal rank by $\mathrm{Max}(TX, TY)$.  Here, $\mathrm{Sbm}(X, Y)$ and
$\mathrm{Max}(TX, TY)$ are endowed with $C^{1}$-compact-open topology
and $C^{0}$-compact-open topology, respectively.  If $X$ has a non-empty
boundary, we impose no other condition on the boundary.  The essence of
the Phillips' theory is the following theorem.  
\begin{AppendixThm}[Phillips \cite{Phillips}]\label{Phillips_HD_thm}
 If $X$ has a handle decomposition with (possibly countably infinitely
 many) handles of indices less than $\mathrm{dim}X$, then the
 differential map $d:\mathrm{Sbm}(X, Y)\rightarrow\mathrm{Max}(TX, TY)$
 is a weak homotopy equivalence.
\end{AppendixThm}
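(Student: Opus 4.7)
The plan is to prove the theorem by induction on the handles of $X$, reducing the global weak equivalence to a local integration problem on a single handle. Enumerate the handles as $H_{0}, H_{1}, \ldots$ of indices $k_{n} < m := \mathrm{dim}\, X$, set $X_{n} = H_{0} \cup \cdots \cup H_{n}$, and first establish the corresponding relative weak equivalence at each finite stage $X_{n}$ (with the attaching submersion and bundle map on $X_{n-1}$ held fixed). The passage to the countably infinite limit is then formal: any continuous map from a compact sphere or its cone into $\mathrm{Sbm}(X, Y)$ or $\mathrm{Max}(TX, TY)$ has image meeting only finitely many handles by compactness, so the homotopy groups of the limit spaces are the direct limits of those at the finite stages, and the weak equivalence passes through.

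The technical core is a \emph{single-handle integration lemma}. Write a handle of index $k < m$ as $H = D^{k} \times D^{m-k}$ with attaching region $\partial_{-} H = \partial D^{k} \times D^{m-k}$. Given a submersion $\sigma_{0}$ on a neighborhood of $\partial_{-} H$ and a bundle morphism $\Phi : TH \to TY$ of maximal rank extending $d\sigma_{0}$, one must produce a submersion $\sigma : H \to Y$ extending $\sigma_{0}$ such that $d\sigma$ is homotopic to $\Phi$ rel $\partial_{-} H$ through maximal-rank bundle morphisms. The index restriction $k < m$ is what makes this possible: it leaves at least one free fiber direction in $H$, providing the slack needed to integrate the formal data. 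Concretely, $\Phi|_{D^{k} \times \{0\}}$ is a maximal-rank bundle map from the trivial rank-$k$ bundle on the core to $TY$ and can be integrated to a genuine smooth map $D^{k} \times \{0\} \to Y$ by path-lifting along radial lines of $D^{k}$; one then extends to a submersion on the full handle by applying the exponential map in $Y$ to the image of the fiber coordinates under $\Phi$, and, for a sufficiently small tubular neighborhood, a straight-line homotopy in the bundle morphism space connects $d\sigma$ to $\Phi$ through maximal-rank morphisms.

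The hard part will be the \emph{parametric version} of this single-handle lemma: for a compact pair $(P, Q)$ parametrizing formal submersions on $H$ with a prescribed integration over $Q$, one must produce a continuous family of actual submersions over $P$ realizing them, rel $\partial_{-} H$ and rel $Q$. This is the real homotopy-theoretic content of Phillips' theorem; it amounts to showing that the space of integrations of a fixed formal submersion on $H$, relative to its boundary data, is weakly contractible. Once the parametric single-handle lemma is in hand, the handle-by-handle induction (just obstruction theory in the fibers of $\mathrm{Max}(TH, TY) \to \mathrm{Max}(T\partial_{-} H, TY)$) and the direct-limit argument above complete the proof.
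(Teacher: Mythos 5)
Your overall architecture (handle-by-handle induction followed by a limit argument over the countably many handles) matches the structure of Phillips' proof that the paper sketches via Lemmata~\ref{PhillipsLem_D} and~\ref{PhillipsLem_H}, but both of the steps you rely on have genuine gaps. The limit step is not a direct-limit/compactness argument: writing $X_{n}=H_{0}\cup\cdots\cup H_{n}$, a point of $\mathrm{Sbm}(X,Y)$ is a single submersion defined on all of $X$, so $\mathrm{Sbm}(X,Y)$ and $\mathrm{Max}(TX,TY)$ are \emph{inverse} limits of the towers $\{\mathrm{Sbm}(X_{n},Y)\}$ and $\{\mathrm{Max}(TX_{n},TY)\}$ under restriction. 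There is no sense in which a compact family of globally defined submersions ``meets only finitely many handles,'' and $\pi_{k}$ of an inverse limit is not in general the limit of the $\pi_{k}$'s. What makes the limit work is that the restriction maps are \emph{fibrations} (this is the content of Lemma~\ref{PhillipsLem_H}, and of Lemmata~\ref{Phillips_byproduct} and~\ref{fibration_lemma}), so that one has towers of fibrations in which levelwise weak equivalences pass to the inverse limit, modulo the usual $\varprojlim^{1}$ bookkeeping. You need this fibration property for the inductive step as well, and you never state or establish it.

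The second and more serious gap is the single-handle integration. Your concrete sketch does not work: the bundle morphism $\Phi$ covers some underlying map $\bar{\phi}$, so $\Phi_{x}$ takes values in $T_{\bar{\phi}(x)}Y$ and ``integrating along radial lines'' of the core is not even well posed without first transporting vectors; in any case the resulting map bears no useful relation to $\Phi$ off the radial directions. Since the core $D^{k}$ cannot be shrunk, $\Phi$ need not be $C^{0}$-close to any holonomic differential, and the proposed straight-line homotopy from $d\sigma$ to $\Phi$ will in general leave the maximal-rank locus. The parametric single-handle statement that you correctly flag as ``the hard part'' is exactly where the analytic content of the theorem lives---in Phillips' treatment it is carried by the disk lemma (Lemma~\ref{PhillipsLem_D}) together with the fibration lemma, and it is not an exercise in obstruction theory in the fibers of $\mathrm{Max}$. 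Deferring it without proof leaves the theorem unproved; the paper, reasonably, quotes these two lemmata from \cite{Phillips} rather than reproving them, and that is the honest fallback here.
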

\noindent
Since an open manifold has a handle decomposition with (countably
infinitely many) handles of indices less than the dimension of the
manifold, we have the following theorem as a corollary.  
\begin{AppendixThm}[Phillips \cite{Phillips}]\label{Phillips_OM_thm}
 If $X$ is an open manifold, then the differential map
 $d:\mathrm{Sbm}(X, Y)\rightarrow\mathrm{Max}(TX, TY)$ is a weak
 homotopy equivalence.  
\end{AppendixThm}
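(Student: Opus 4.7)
My plan is to deduce Theorem \ref{Phillips_OM_thm} as an immediate corollary of Theorem \ref{Phillips_HD_thm} by supplying, for any open manifold $X$, a handle decomposition whose handles all have index strictly less than $n=\mathrm{dim}\,X$. Once such a decomposition is in hand, the conclusion follows verbatim from the statement of Theorem \ref{Phillips_HD_thm}, so there is no additional work on the mapping space side.

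The key geometric step is thus to prove the following folklore fact: every smooth open manifold $X$ admits a (possibly countably infinite) handle decomposition with handles of indices $0,1,\ldots,n-1$ only. I would derive this from Morse theory. First, choose a proper smooth function $g:X\rightarrow [0,\infty)$; existence is standard since $X$ has no compact component, so we may arrange $g$ to be exhausting and unbounded on every end. Approximating $g$ by a proper Morse function $\tilde g$ (Morse functions are dense in the strong $C^{\infty}$ topology) and passing to the associated handle decomposition gives a handle structure on $X$. A handle of index $n$ would correspond to a local maximum of $\tilde g$; but the set of critical points of any proper function unbounded above contains no local maxima of top index after a further generic perturbation, since near any such critical point we may cancel it against an index-$(n-1)$ handle by pushing the corresponding top cell out through an end. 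Concretely, one uses that every top-index handle attaches along a full $(n-1)$-sphere in the boundary of the previous stage and, because $X$ is open, we can replace it by a collar extending to infinity and remove it via a handle cancellation/slide argument.

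The main obstacle is step two: verifying cleanly that top-index handles can always be eliminated in an open manifold. A rigorous route is to appeal to the standard result of Hirsch and Phillips (cf.~\cite{Phillips} and Whitehead's theorem used earlier in this paper) that every open $n$-manifold is homotopy equivalent to (indeed, deformation retracts onto) an $(n-1)$-dimensional subcomplex; this immediately forces the handle decomposition to be realizable with no $n$-handles, since any $n$-handle would contribute a nontrivial $n$-cell to the $CW$ structure. Equivalently one can argue directly by Morse theory on $-\tilde g$ combined with the fact that the manifold has no compact component.

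Having established the handle decomposition, I would conclude the proof in one line: since the handle decomposition of $X$ satisfies the hypothesis of Theorem \ref{Phillips_HD_thm}, the differential $d:\mathrm{Sbm}(X,Y)\rightarrow\mathrm{Max}(TX,TY)$ is a weak homotopy equivalence, as claimed. The proof is essentially a reduction; all the genuine analytic content lies in Theorem \ref{Phillips_HD_thm}, and the appendix need only spend a few sentences on the handle-theoretic reduction.
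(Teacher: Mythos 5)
Your reduction is exactly the paper's proof: the appendix deduces Theorem \ref{Phillips_OM_thm} from Theorem \ref{Phillips_HD_thm} in a single sentence by citing the standard fact that an open manifold admits a handle decomposition with (possibly countably infinitely many) handles of index strictly less than $\mathrm{dim}\,X$. The additional Morse-theoretic sketch you give of that fact goes beyond what the paper records (the paper simply asserts it), and its weakest step --- inferring the absence of $n$-handles directly from homotopy equivalence to an $(n-1)$-complex rather than from the usual handle-trading/cancellation argument for proper Morse functions on open manifolds --- would need more care, but the reduction itself is the intended and correct argument.
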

\noindent
In the proof of Theorem \ref{Phillips_HD_thm} (and
\ref{Phillips_OM_thm}), the following are key lemmata.  
\begin{AppendixLem}\label{PhillipsLem_D}
 The differential map
 $d:\mathrm{Sbm}(D, Y)\rightarrow\mathrm{Max}(TD, TY)$ is a weak
 homotopy equivalence, where $D$ denotes a disk of dimension
 $\mathrm{dim}X$.   
\end{AppendixLem}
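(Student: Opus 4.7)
The plan is to show both the source and target of $d$ are weakly equivalent to a common finite-dimensional ``$1$-jet space'' at a chosen basepoint, which will force $d$ itself to be a weak equivalence by two-out-of-three. Fix an interior basepoint $0\in D$ and define
\[
 J := \{(y, A) \mid y\in Y,\ A\colon T_{0}D\to T_{y}Y \text{ is a surjective linear map}\},
\]
a Stiefel-type bundle over $Y$. There are natural evaluation maps $e_{S}\colon\mathrm{Sbm}(D, Y)\to J$, $\varphi\mapsto (\varphi(0), d\varphi_{0})$, and $e_{M}\colon\mathrm{Max}(TD, TY)\to J$, $\Phi\mapsto (f_{\Phi}(0), \Phi_{0})$, where $f_{\Phi}$ is the base map of $\Phi$, and they satisfy $e_{M}\circ d=e_{S}$. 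Hence it suffices to prove that each of $e_{S}$ and $e_{M}$ is a weak homotopy equivalence.

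The map $e_{M}$ I would dispose of first, purely formally. Fix a trivialization $\tau\colon TD\cong D\times T_{0}D$, under which an element of $\mathrm{Max}(TD, TY)$ corresponds to a pair $(f, \Phi^{\tau})$ consisting of a base map $f\colon D\to Y$ and a family $\Phi^{\tau}(x)\in\mathrm{Hom}^{\mathrm{surj}}(T_{0}D, T_{f(x)}Y)$. Since $D$ is star-shaped at $0$, the family $(f_{s}, \Phi_{s}^{\tau})(x):=(f((1-s)x), \Phi^{\tau}((1-s)x))$ deformation-retracts $\mathrm{Max}(TD, TY)$ onto the subspace of pairs that are constant in $x$, which is canonically identified with $J$.

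For $e_{S}$ I plan to use the classical rescaling trick. Equip $Y$ with a complete Riemannian metric and let $\exp$ denote its exponential. Given $\varphi\in\mathrm{Sbm}(D, Y)$ with $y_{0}=\varphi(0)$, choose $\lambda>0$ so small that $\varphi(\lambda D)$ lies in a normal neighborhood of $y_{0}$, and define, for $t\in(0, \lambda]$,
\[
 \varphi_{t}(x) := \exp_{y_{0}}\!\bigl(t^{-1}\exp_{y_{0}}^{-1}\varphi(tx)\bigr),\qquad x\in D.
\]
Each $\varphi_{t}$ is a submersion $D\to Y$, since it is a composition of diffeomorphisms with the restriction of the submersion $\varphi$ to the dilated disk $tD$; a Taylor expansion gives $\varphi_{t}\to\exp_{y_{0}}\!\circ\, d\varphi_{0}$ as $t\to 0^{+}$, so the family extends continuously to $t=0$. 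The resulting homotopy deforms $\mathrm{Sbm}(D, Y)$ onto the subspace of maps of the form $\exp_{y_{0}}\!\circ A$ with $(y_{0}, A)\in J$, which is homeomorphic to $J$ via $e_{S}$.

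The main obstacle I anticipate is carrying out the construction continuously over parameter families (maps $S^{k}\to\mathrm{Sbm}(D, Y)$ together with chosen extensions to $D^{k+1}$ on the $\mathrm{Max}$ side): the constant $\lambda$ and the normal neighborhood about each varying $y_{0}$ must be chosen locally uniformly in $\varphi$ and the parameter. This should be handled by standard partition-of-unity and cut-off arguments on the parameter sphere, which also promote the absolute deformation-retract statement to a genuine weak equivalence on all $\pi_{k}$. Together with the analogous fact for $e_{M}$, this proves the lemma.
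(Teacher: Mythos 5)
First, a point of comparison: the paper does not prove this lemma at all. It is quoted verbatim from Phillips' submersion theory, and the text explicitly defers to \cite{Phillips} for the details; so your attempt cannot be matched against an argument in the paper, only against the standard one. Your overall architecture is in fact the standard (and essentially Phillips') one: map both $\mathrm{Sbm}(D,Y)$ and $\mathrm{Max}(TD,TY)$ to the $1$-jet space $J$ at the center of the disk, show each evaluation map is a weak equivalence, and conclude by two-out-of-three. The $e_{M}$ half is fine: after trivializing $TD$, $\mathrm{Max}(TD,TY)$ is just $\mathrm{Map}(D,J)$ and evaluation at $0$ is a homotopy equivalence because $D$ is contractible.

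The $e_{S}$ half, as written, has a genuine gap in the rescaling step. You assert that each $\varphi_{t}(x)=\exp_{y_{0}}\bigl(t^{-1}\exp_{y_{0}}^{-1}\varphi(tx)\bigr)$ is a submersion ``since it is a composition of diffeomorphisms with the restriction of $\varphi$ to $tD$.'' But the outer $\exp_{y_{0}}$ is being applied to vectors of norm comparable to $\lVert d\varphi_{0}\rVert\,\lvert x\rvert$, which does not shrink as $t\to 0^{+}$ and can exceed the conjugate radius at $y_{0}$; there $d(\exp_{y_{0}})$ is singular and $\varphi_{t}$ fails to be a submersion. The same problem kills the limit object: for $Y=S^{2}$ with the round metric and $\lVert d\varphi_{0}\rVert>\pi$, the map $\exp_{y_{0}}\circ\, d\varphi_{0}$ is not a submersion of $D$, so the subspace you retract onto is not contained in $\mathrm{Sbm}(D,Y)$. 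The repair is routine but must be made: insert a small factor $\epsilon>0$, chosen uniformly over the compact parameter family, so that everything lands inside a normal neighborhood of $y_{0}$ where $\exp_{y_{0}}$ really is a diffeomorphism (i.e.\ retract onto maps of the form $\exp_{y_{0}}\circ\,\epsilon A$, and observe that $(y_{0},\epsilon A)\mapsto(y_{0},A)$ is a deformation of $J$, since positive scalings act by homotopies on the space of surjections). With that correction, and with the parameter-family uniformity you already flag handled as you indicate, the argument goes through and reproduces the classical proof.
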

\begin{AppendixLem}\label{PhillipsLem_H}
 Let $V$ be a compact manifold with $\mathrm{dim}V=\mathrm{dim}X$.
 Suppose $W$ is obtained by attaching a handle of index less than
 $\mathrm{dim}V$.  Then, the restriction maps $\rho :\mathrm{Sbm}(W,
 Y)\rightarrow\mathrm{Sbm (V, Y)}$ and $\rho :\mathrm{Max}(TW,
 TY)\rightarrow\mathrm{Max}(TV, TY)$ are fibrations. 
\end{AppendixLem}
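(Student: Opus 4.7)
The plan is to verify the Serre homotopy lifting property for both restriction maps by reducing the problem to a single attached handle. Set $n = \dim V$ and write the attached handle as $H \cong D^{k}\times D^{n-k}$ with $k < n$, glued to $V$ along $\partial_{-}H = S^{k-1}\times D^{n-k}$. Choose small open collars in $W$ so that $W = V'\cup H'$, where $V'$ is an open thickening of $V$ and $H'$ is an open thickening of $H$, meeting in an open tube $A$ around the attaching sphere. Since being a submersion and being a maximal-rank bundle morphism are both local properties, the standard Mayer--Vietoris patching gives pullback identifications
\[
\mathrm{Sbm}(W, Y) \cong \mathrm{Sbm}(V', Y) \times_{\mathrm{Sbm}(A, Y)} \mathrm{Sbm}(H', Y),
\]
\[
\mathrm{Max}(TW, TY) \cong \mathrm{Max}(TV', TY) \times_{\mathrm{Max}(TA, TY)} \mathrm{Max}(TH', TY),
\]
combined with the fact that restriction along the collar inclusion $V \hookrightarrow V'$ is a Hurewicz fibration. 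Because pullbacks preserve fibrations, it suffices to prove that the handle-side restrictions
\[
\mathrm{Sbm}(H', Y) \to \mathrm{Sbm}(A, Y), \qquad \mathrm{Max}(TH', TY) \to \mathrm{Max}(TA, TY)
\]
are Serre fibrations.

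The bundle-morphism version is essentially formal: $\mathrm{Max}(TH', TY)$ is an open subspace of the space of continuous sections of the bundle $\mathrm{Hom}(TH', TY) \to H'$, and restriction of sections along the closed cofibration $A \hookrightarrow H'$ is a Hurewicz fibration in the compact-open topology.

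The submersion version is the crux and the main obstacle. Given a parameter polyhedron $P$, a homotopy $g_{t, p}\colon A \to Y$ of submersions ($t \in I$, $p \in P$), and an initial lift $\tilde g_{0, p}\colon H' \to Y$ extending $g_{0, p}$, one must construct a compatible continuous family $\tilde g_{t, p}\colon H' \to Y$. The hypothesis $k < n$ is essential here: the positive codimension $n - k \geq 1$ supplies a \emph{free direction} along the co-core of the handle. Following Phillips, the construction proceeds in three steps. First, choose a smooth ambient isotopy of $H'$ that squeezes the handle into a thin neighborhood of $A$ while fixing a smaller collar of $\partial_{-}H$ pointwise. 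Second, pull back the given family $g_{t, p}$ along this isotopy to obtain a candidate family on the squeezed image, agreeing with $\tilde g_{0, p}$ at $t = 0$. Third, interpolate between this candidate and $\tilde g_{0, p}$ over the ``gap'' region lying between the squeezed image and $\partial_{+}H$, applying Lemma \ref{PhillipsLem_D} parametrically to each fiber disk of the gap in order to convert the max-rank bundle data (obtained from the differentials) back into genuine submersions. The hardest step is arranging these constructions to glue smoothly across $\partial A$ and to depend continuously on $(t, p)$ in the $C^{1}$-compact-open topology; this is the technical heart of Phillips' argument in \cite{Phillips}, which I would follow in detail.
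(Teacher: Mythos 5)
First, a point of calibration: the paper does not prove this lemma at all. It is stated as one of Phillips' key lemmata, and the text explicitly refers the reader to \cite{Phillips} for the details of how Theorem \ref{Phillips_HD_thm} is assembled from Lemmata \ref{PhillipsLem_D} and \ref{PhillipsLem_H}. So you are attempting something the paper deliberately leaves as a citation, and your sketch should be judged against Phillips' argument rather than against anything in this paper. Your overall architecture is the right one: reduce by a patching/pullback argument to the restriction from the handle to its attaching region, dispose of the $\mathrm{Max}$ case as a section-space formality, and isolate the submersion case as the crux, with the index condition $k<\dim V$ and a parametric use of Lemma \ref{PhillipsLem_D} doing the real work. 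That is indeed the Haefliger--Po\'enaru/Phillips skeleton.

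There is, however, a concrete error in the step you call the first of three: you cannot choose an isotopy of $H'\cong D^{k}\times D^{n-k}$ squeezing the handle into a thin neighborhood of the attaching tube $A\simeq S^{k-1}\times D^{n-k}$. Already for $k=1$ this is obstructed by connectivity: the image of the connected handle cannot lie in a small neighborhood of the two components of $S^{0}\times D^{n-1}$. The correct compression, and the one Phillips uses, is onto a neighborhood of $\partial_{-}H$ \emph{together with the core} $D^{k}\times\{0\}$ (the radial retraction in the $D^{n-k}$-factor). This changes what your second step can mean: the given family $g_{t,p}$ lives only over $A$, not over the core, so pulling back along the compression does not by itself produce a candidate family on the squeezed image. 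The actual content of the lemma is the extension of the homotopy of submersions \emph{across the core}, and this is where $k<n$ enters: the core has positive codimension, so the formal (bundle-map) data can be homotoped over it freely and then integrated to genuine submersions on a sufficiently thin tube around it --- your third step, applying Lemma \ref{PhillipsLem_D} parametrically to the normal disks, is the right tool, but it must be aimed at the core, not at a ``gap'' between $A$ and $\partial_{+}H$. Finally, since you defer ``the technical heart'' to \cite{Phillips}, what you have is an outline rather than a proof; that matches the paper's own treatment, but the outline should at least state the compression target correctly, since the whole role of the index hypothesis is invisible otherwise.
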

\noindent
The proof of Theorem \ref{Phillips_HD_thm} is carried out by starting
with Lemma \ref{PhillipsLem_D}, applying Lemma \ref{PhillipsLem_H}
handle by handle, and an inverse limit argument.  We refer the reader to 
\cite{Phillips} for the details.  Applying the inverse limit argument in
the proof of Theorem \ref{Phillips_HD_thm} and \ref{Phillips_OM_thm}, 
we have the following. 
\begin{AppendixLem}\label{Phillips_byproduct}
 Let $W$ be a codimension $0$ compact submanifold of an open manifold
 $X$.  Then, the restriction maps $\rho :\mathrm{Sbm}(X,
 Y)\rightarrow\mathrm{Sbm (W, Y)}$ and $\rho :\mathrm{Max}(TX,
 TY)\rightarrow\mathrm{Max}(TW, TY)$ are fibrations.  
\end{AppendixLem}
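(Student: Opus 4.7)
The plan is to construct a handle decomposition of $X$ in which $W$ is the bottom piece and all further handles have index strictly less than $\dim X$, apply Lemma \ref{PhillipsLem_H} handle by handle to obtain a tower of fibrations, and then identify $\mathrm{Sbm}(X,Y)$ (respectively $\mathrm{Max}(TX,TY)$) with the inverse limit of that tower in the $C^{1}$- (respectively $C^{0}$-) compact-open topology. This is precisely the inverse limit strategy already used to prove Theorem \ref{Phillips_HD_thm} and Theorem \ref{Phillips_OM_thm}, applied relative to $W$.

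First I would observe that $X\setminus\Int W$ is itself an open manifold with boundary $\partial W$, and therefore admits a handle decomposition built on a collar of $\partial W$ using countably many handles of index strictly less than $\dim X$. Arranging these handles in a linear order yields a compact exhaustion $W=V_{0}\subset V_{1}\subset V_{2}\subset\cdots$ of $X$ by codimension-$0$ submanifolds with $X=\bigcup_{i\geq 0}V_{i}$, where $V_{i+1}$ is obtained from $V_{i}$ by attaching a single handle of index less than $\dim X$. Lemma \ref{PhillipsLem_H} then yields, for every $i\geq 0$, fibrations
\[
\rho_{i}:\mathrm{Sbm}(V_{i+1},Y)\to\mathrm{Sbm}(V_{i},Y),\qquad
\rho_{i}:\mathrm{Max}(TV_{i+1},TY)\to\mathrm{Max}(TV_{i},TY).
\]

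Because every compact subset of $X$ eventually lies inside some $V_{i}$, the natural restriction maps identify $\mathrm{Sbm}(X,Y)$ homeomorphically with $\varprojlim_{i}\mathrm{Sbm}(V_{i},Y)$, and likewise for $\mathrm{Max}$. The restriction map $\rho:\mathrm{Sbm}(X,Y)\to\mathrm{Sbm}(W,Y)=\mathrm{Sbm}(V_{0},Y)$ is thus the projection from the inverse limit of the above tower onto its base. To verify the homotopy lifting property, suppose a space $A$, a map $g:A\to\mathrm{Sbm}(X,Y)$, and a homotopy $H:A\times I\to\mathrm{Sbm}(W,Y)$ with $H(\,\cdot\,,0)=\rho\circ g$ are given. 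Letting $g_{i}$ denote the composite of $g$ with projection to $\mathrm{Sbm}(V_{i},Y)$, set $H_{0}=H$ and inductively use the fibration property of $\rho_{i}$ to produce a homotopy $H_{i+1}:A\times I\to\mathrm{Sbm}(V_{i+1},Y)$ that starts at $g_{i+1}$ and covers $H_{i}$. By construction these lifts are compatible under the projections, so they assemble into a map $\tilde{H}:A\times I\to\mathrm{Sbm}(X,Y)$; continuity is automatic because the target carries the inverse limit topology. The same reasoning applies verbatim to $\mathrm{Max}$.

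The main obstacle, which is essentially a bookkeeping issue rather than a deep one, is arranging the handle decomposition of $X$ so that $W$ appears as the starting piece $V_{0}$ and every handle added thereafter has index below $\dim X$. This reduces directly to the open-manifold handle theory already invoked for Theorems \ref{Phillips_HD_thm} and \ref{Phillips_OM_thm}, now applied to the open manifold $X\setminus\Int W$ relative to $\partial W$. Once that is in place, the tower-of-fibrations argument is a straightforward adaptation of the inverse limit step in Phillips' original proof, and we obtain both assertions of the lemma simultaneously.
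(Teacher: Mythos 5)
Your overall strategy --- realize $W$ as the bottom stage $V_{0}$ of a handle filtration of $X$ with all subsequent handles of index $<\dim X$, apply Lemma \ref{PhillipsLem_H} stage by stage, and pass to the inverse limit --- is exactly the ``inverse limit argument'' the paper alludes to (it gives no written proof of this lemma), and your verification that the projection from the inverse limit of a tower of fibrations onto its base satisfies the homotopy lifting property is correct: the stagewise lifts are compatible by construction and assemble continuously because the limit carries the initial topology.

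There is, however, a genuine gap in the first step. You assert that $X\setminus\Int W$ ``is itself an open manifold with boundary $\partial W$'' and therefore admits a handle decomposition on a collar of $\partial W$ with all handles of index strictly less than $\dim X$. This is false in the generality of the lemma's hypotheses: even though $X$ is open (no compact components, no boundary), the complement $X\setminus\Int W$ may well have \emph{compact} components. For instance, take $X=\R^{3}$ and $W$ a tubular neighborhood $T^{2}\times[0,1]$ of an embedded torus; then $X\setminus\Int W$ contains a compact solid torus. A compact $n$-manifold $C$ with nonempty boundary cannot be built from a collar of $\partial C$ using only handles of index $<n$, since $H_{n}(C,\partial C;\ZZ)\neq 0$ forces at least one index-$n$ handle; so Lemma \ref{PhillipsLem_H} cannot be applied at that final stage and your tower breaks down there. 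To repair the argument you must either restrict to the case actually used in the paper --- $W=N(L)$ a disjoint union of solid tori, where a compact component of $M\setminus\Int N(L)$ together with its adjacent solid tori would form a compact component of the open manifold $M$, a contradiction, so no compact complementary components occur --- or invoke Phillips' more refined statements directly rather than only Lemma \ref{PhillipsLem_H}. (Note also that for the $\mathrm{Max}$ half no handle-index condition is needed at all, since restriction of bundle morphisms over a codimension $0$ compact submanifold is a fibration for purely formal reasons.)
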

\noindent
To be precise, in the literature an open manifold could have a non-empty
boundary.  Thus, the following lemma might be in fact contained in Lemma
\ref{Phillips_byproduct}, however, we give it here as a precise
statement we need in the proof of Theorem \ref{correct_thm}.  
\begin{AppendixLem}\label{fibration_lemma}
 Suppose that $X$ is a manifold with no compact component and $\partial
 X\neq\emptyset$.  Let $W$ be a codimension $0$ compact submanifold of
 $X$ such that $\partial X\subset\partial W$.  Then, the restriction
 maps $\rho :\mathrm{Sbm}(X, Y)\rightarrow\mathrm{Sbm (W, Y)}$ and $\rho
 :\mathrm{Max}(TX, TY)\rightarrow\mathrm{Max}(TW, TY)$ are fibrations.  
\end{AppendixLem}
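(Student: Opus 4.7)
The plan is to reduce to Lemma \ref{Phillips_byproduct} by attaching an open collar to $\partial X$. Form $\hat X:=X\cup_{\partial X}(\partial X\times[0,\infty))$. The hypothesis $\partial X\subset\partial W\subset W$ forces $\partial X$ to be compact, so the attached collar is a product of a compact manifold with a ray and contributes no compact component; combined with the no-compact-component hypothesis on $X$, this makes $\hat X$ boundaryless with no compact component, i.e.\ open in the sense of the paper. The submanifold $W$ is unchanged and remains a codimension-$0$ compact submanifold of $\hat X$, so Lemma \ref{Phillips_byproduct} applied to $(\hat X,W)$ shows that the restriction maps $\tilde\rho:\mathrm{Sbm}(\hat X,Y)\to\mathrm{Sbm}(W,Y)$ and $\tilde\rho:\mathrm{Max}(T\hat X,TY)\to\mathrm{Max}(TW,TY)$ are Serre fibrations.

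The desired $\rho$ factors as $\tilde\rho=\rho\circ r$, where $r:\mathrm{Sbm}(\hat X,Y)\to\mathrm{Sbm}(X,Y)$ is restriction from $\hat X$ to $X$ (and similarly for $\mathrm{Max}$). The next step is to show that $r$ is a Serre fibration with weakly contractible fibers. I would exhaust the collar by the nested sequence $X\subset X\cup(\partial X\times[0,1])\subset X\cup(\partial X\times[0,2])\subset\cdots$ whose union is $\hat X$. Each added slab $\partial X\times[k-1,k]$ is a product of a closed $(n-1)$-manifold with $[0,1]$ (with $n=\dim X$), and hence admits a handle decomposition -- obtained by crossing a handle decomposition of $\partial X$ with $[0,1]$ -- whose handles all have index at most $n-1<n$. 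Attaching these handles one at a time to the current submanifold and invoking Lemma \ref{PhillipsLem_H} at each step, the inverse limit argument in the proof of Theorem \ref{Phillips_HD_thm} yields that $r$ is a Serre fibration. The fiber of $r$ over $f\in\mathrm{Sbm}(X,Y)$ is the space of submersion extensions of $f$ across the product end $\partial X\times[0,\infty)$ with prescribed boundary data on $\partial X\times\{0\}$; by Theorem \ref{Phillips_OM_thm} applied to this product, it is weakly equivalent to the corresponding space of maximal-rank bundle-map extensions of $df|_{\partial X}$, and the latter is contractible since $\partial X\times[0,\infty)$ deformation retracts onto $\partial X$.

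Finally, since $r$ is a Serre fibration with weakly contractible fibers, obstruction theory produces a continuous section $s:\mathrm{Sbm}(X,Y)\to\mathrm{Sbm}(\hat X,Y)$ with $r\circ s=\mathrm{id}$. Given a homotopy $h:Z\times I\to\mathrm{Sbm}(W,Y)$ and an initial lift $\tilde h_{0}:Z\to\mathrm{Sbm}(X,Y)$, the composition $s\circ\tilde h_{0}:Z\to\mathrm{Sbm}(\hat X,Y)$ serves as an initial lift under $\tilde\rho$; lifting $h$ through the fibration $\tilde\rho$ and then applying $r$ to the result yields the desired lift in $\mathrm{Sbm}(X,Y)$, establishing the homotopy lifting property for $\rho$. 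The same reasoning works verbatim for the $\mathrm{Max}$ spaces. The main obstacle is the second step: verifying both that $r$ is a Serre fibration and that its fiber is weakly contractible. The fibration property reduces to the handle-by-handle inverse limit, but the contractibility of the fiber requires a careful invocation of Phillips' theorem on the non-compact product $\partial X\times[0,\infty)$ with boundary data.
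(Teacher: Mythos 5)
Your proposal takes a genuinely different route from the paper, which in fact offers no argument for this lemma at all: it simply remarks that in Phillips' usage an ``open manifold'' may have non-empty boundary (only compact components being excluded), so that Lemma \ref{fibration_lemma} is already an instance of Lemma \ref{Phillips_byproduct}, i.e.\ of the same handle-by-handle and inverse-limit argument of \cite{Phillips}. Your reduction to the boundaryless case by attaching the external collar $\partial X\times[0,\infty)$ and factoring $\tilde\rho=\rho\circ r$ is a legitimate, more self-contained strategy; the observation that $\partial X$ is compact and that $\hat X$ is open in the paper's sense is correct, and the application of Lemma \ref{Phillips_byproduct} to the pair $(\hat X,W)$ is sound.

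Two steps, however, have genuine gaps. First, Lemma \ref{PhillipsLem_H} is stated only for $V$ compact, while you attach the collar handles to the non-compact manifold $X$; to make the handle-by-handle argument for $r$ rigorous you must either prove the handle lemma for non-compact $V$ or run the construction through simultaneous compact exhaustions of $X$ and $\hat X$ and pass to inverse limits --- at which point you are essentially reproving Lemma \ref{Phillips_byproduct} rather than quoting it. Second, and more seriously, the claim that a Serre fibration with weakly contractible fibers admits a global section ``by obstruction theory'' requires the base to be (homotopy equivalent to) a CW complex; $\mathrm{Sbm}(X,Y)$ with the $C^{1}$ compact-open topology on a non-compact $X$ is not known to be such a space, so no global section $s$ is available. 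The conclusion survives without it: for the Serre homotopy lifting property one only needs to lift the single map $\tilde h_{0}:Z\to\mathrm{Sbm}(X,Y)$, $Z$ a cube, through the acyclic Serre fibration $r$, and acyclic Serre fibrations have the right lifting property against $\emptyset\to Z$ for every CW complex $Z$; note that the use made of the lemma in the proof of Theorem \ref{correct_thm} is only the lifting of a path with prescribed initial point, so the Serre property suffices. Relatedly, the weak contractibility of the fibers of $r$ should not be obtained by applying Theorem \ref{Phillips_OM_thm} ``to the product'': the fiber of $r$ is a relative mapping space with prescribed values on all of $X$, not $\mathrm{Sbm}$ of an open manifold. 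It is better deduced from the commutative square comparing $r$ with its $\mathrm{Max}$ analogue, whose horizontal differentials are weak equivalences by Theorems \ref{Phillips_HD_thm} and \ref{Phillips_OM_thm} (the one on $X$ because a manifold with no compact component has a handle decomposition with handles of index less than $\dim X$), together with the elementary fact that the space of maximal-rank bundle-map extensions over the collar, which deformation retracts to $\partial X$, is contractible.
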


Now, we can prove Theorem \ref{correct_thm}.  We add a correct
consideration on the trivialization of the tangent bundles, however,
we mostly follow the proof in \cite{Top_paper}.  
\begin{proof}[Proof of Theorem \ref{correct_thm}]
 Assume that (\ref{realization}) holds.  Then the preimage by $\varphi$ of
 a semiline starting from the origin to the end of $\R^{2}$ is a surface
 in $M$ which is bounded by $\varphi^{-1}(0)=L$.  By the condition of
 $\varphi$ on the transverse orientation to $L$, we may choose the
 orientation on the surface so that $L$ represents the null-class in the
 locally finite homology group.  Moreover, as mentioned in the Introduction,
 $\varphi$ determines a trivialization of $TM$ which restricts to a
 tangential framing of $L$.  The projection with respect to the framing
 associated with this tangential framing of $L$ coincides with $\varphi$
 near $L$.  Thus, (\ref{condition2}) holds.  

 Next, assume that (\ref{condition2}) holds.  Choose a framing $\nu
 :\bigsqcup_{j=1}^{n}(S^{1}\times D^{2})_{j}\rightarrow N(L)$ which is
 preferred and suppose there exists a trivialization of $TM$ which
 restricts to the trivialization of $TN(L)$ determined by the tangential
 framing induced by $\nu$.  For $0\leq r\leq 1$, set $D^{2}(r):=\{ z\in\C\
 |\ |z|\leq r\}$ and $N_{r}(L):=\nu (\bigsqcup_{j=1}^{n}(S^{1}\times
 D^{2}(r))_{j})$.  Define $\pi :N(L)\rightarrow D^{2}\subset\C =\R^{2}$ to
 be the composition $\mathrm{pr}\circ\nu^{-1}$, where
 $\mathrm{pr}:\bigsqcup_{j=1}^{n}(S^{1}\times D^{2})_{j}\rightarrow
 D^{2}$ is the natural projection onto a single disk.  Set
 $X:=M\setminus\Int N_{1/2}(L)$ and $W:=N(L)\setminus\Int N_{1/2}(L)$.
 Note that $\partial X=\partial N_{1/2}(L)$ since $\partial
 M=\emptyset$.  

 We consider the following commutative diagram consisting of the
 differential maps $d$ and the restriction maps $\rho$.  
\begin{equation}\label{h-principle_diagram}
 \begin{array}{ccc}
  \mathrm{Sbm}(X, C) & \stackrel{d}{\rightarrow} & \mathrm{Max}(TX,
   TC)\\
  \rho\downarrow & & \rho\downarrow\\
  \mathrm{Sbm}(W, C) & \stackrel{d}{\rightarrow} &
   \mathrm{Max}(TW, TC) 
 \end{array}
\end{equation}
 where $C$ denotes $\R^{2}\setminus\Int D^{2}(\frac{1}{2})$.  In the diagram
 the horizontal arrows are weak homotopy equivalences by Theorem
 \ref{Phillips_OM_thm} and \ref{Phillips_HD_thm}, and the vertical
 arrows are fibrations by Lemma \ref{fibration_lemma}.  The projection
 $\pi :N(L)\rightarrow D^{2}$ restricted to $W$, denoted by $\pi |W$,
 belongs to $\mathrm{Sbm}(W, C)$.  For $d(\pi |W)\in\mathrm{Max}(TW,
 TC)$, we have an extension as follows.  
\begin{AppendixClaim}
 There exists $\Phi\in\mathrm{Max}(TX, TC)$ such that $\rho
 (\Phi)=d(\pi |W)$.  
\end{AppendixClaim}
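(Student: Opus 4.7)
The plan is to construct $\Phi$ in two stages: first extend the base map $\pi|W$ to a smooth map $f_{X}:X\to C$, and then build the bundle morphism itself using the two global trivializations at our disposal. Set $E:=M\setminus\Int N(L)$, so that $X=E\cup W$ with $E\cap W=\partial N(L)$. On the outer boundary $\partial N(L)$ the map $\pi|W$ is the projection $\bigsqcup_{j=1}^{n}(S^{1}\times\partial D^{2})_{j}\to\partial D^{2}(1)\subset C$, sending each meridian to a generator of $\pi_{1}(\partial D^{2}(1))$ and each longitude to a point.

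Since $C$ deformation retracts onto a circle, extending $\pi|W$ over $E$ is equivalent to extending the corresponding class in $H^{1}(\partial N(L);\Z)$ over $H^{1}(E;\Z)$. The preferred framing hypothesis of (\ref{condition2}) says exactly that the preferred longitudes $\nu(\bigsqcup_{j}(S^{1}\times\{1\})_{j})$ bound a properly embedded oriented (possibly non-compact) surface $\Sigma\subset E$. The surface $\Sigma$ determines a smooth map $E\to S^{1}$ whose zero fiber is a push-off of $\Sigma$ and whose restriction to $\partial N(L)$ agrees, up to homotopy, with the meridional projection; composing with the inclusion $\partial D^{2}(1)\hookrightarrow C$ and smoothing across a collar of $\partial N(L)$ yields the required extension $f_{X}:X\to C$ of $\pi|W$.

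For the bundle morphism, the hypothesis of (\ref{condition2}) also supplies a trivialization $\Pi:TM\cong M\times\R^{3}$ whose restriction to $TN(L)$ is the tangential framing induced by $\nu$. Using this together with the canonical trivialization $TC\cong C\times\R^{2}$, the definition of $\pi$ shows that on $W$ the differential $d(\pi|W)$ corresponds to the constant bundle map $(x,v)\mapsto(\pi(x),p(v))$, where $p:\R^{3}\to\R^{2}$ projects onto the last two coordinates. I then define $\Phi:TX\to TC$ in these trivializations by $\Phi(x,v):=(f_{X}(x),p(v))$. Because $p$ is surjective, $\Phi$ lies in $\mathrm{Max}(TX,TC)$, and by construction $\Phi|TW=d(\pi|W)$, so $\rho(\Phi)=d(\pi|W)$ as required.

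The main obstacle is the topological extension in the first step. Without the preferred framing hypothesis, the obstruction to extending the meridional cohomology class over $E$ would in general be non-trivial; the surface $\Sigma$ is precisely what eliminates that obstruction. The second step, by contrast, is purely formal linear algebra enabled by the extension hypothesis on the tangential framing: once $\Pi$ is available on all of $M$, the constant linear model for $d\pi$ on $N(L)$ extends trivially to the whole of $X$.
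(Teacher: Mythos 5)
Your proposal is correct, and its second stage --- writing $\Phi(x,v)=(f_{X}(x),p(v))$ in the global trivializations of $TM$ and $T\R^{2}$, so that the whole claim reduces to extending the base map $\pi|W$ over $X$ --- is exactly the paper's argument. Where you genuinely differ is in how the extension of $\pi|W$ is produced. The paper proceeds algebraically: it views $[\pi|W]$ as a class in $[\partial X,S^{1}]=H^{1}(\partial X;\Z)$ and shows via the cohomology exact sequence of $(X,\partial X)$ and Poincar\'e--Lefschetz duality that the obstruction $\delta[\pi|W]\in H^{2}(X,\partial X;\Z)$ is dual to the class of the preferred longitudes in $H_{1}^{\infty}(X;\Z)$, hence vanishes. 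You instead argue geometrically, producing a properly embedded surface $\Sigma$ bounded by the preferred longitudes and taking the circle-valued map it induces by the tubular-neighborhood (Pontryagin--Thom) construction; the two are dual formulations of the same obstruction computation, yours being more visual and the paper's requiring no representability statement. The one step you should not gloss over is the assertion that the preferred-framing hypothesis ``says exactly'' that the longitudes bound a properly embedded surface $\Sigma\subset E$: the definition only says the longitudes are null-homologous in $H_{1}^{\infty}(M\setminus\Int N(L);\Z)$, and upgrading this to a properly embedded surface is itself proved by the very duality and exact-sequence argument you are bypassing (realize a class $\eta\in H_{2}^{\infty}(E,\partial E;\Z)$ with $\partial\eta$ equal to the longitude class by a map $E\to S^{1}$ and take the preimage of a regular value). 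So your route is valid but, once this step is justified, not actually shorter. Finally, note that the claim asks for the exact equality $\rho(\Phi)=d(\pi|W)$ rather than equality up to homotopy; you handle this correctly by absorbing the homotopy on $\partial N(L)$ into a collar, where the paper instead relies on the fibration property of the restriction map when it passes to homotopy classes.
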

\begin{proof}
 By the canonical trivialization of $T\R^{2}$, we may consider that
 $TC=C\times\R^{2}$.  By the assumption, we have a trivialization of
 $TX=TM|X$ which restricts to the trivialization of $TW=TN(L)|W$
 determined by the framing $\nu$.  Thus, $d(\pi |W)$ is represented as 
\[
 TW\cong W\times\R^{3}\rightarrow C\times\R^{2}\cong TC; (x, (v_{1},
 v_{2}, v_{3}))\mapsto (\pi (x), (v_{2}, v_{3}))
\]
 Therefore, in
 order to obtain an extension of $d(\pi |W)$, we only have to show that
 the map $\pi |W:W\rightarrow C$ extends to $X$.  For this purpose,
 we may consider the problem up to homotopy.  Since $W$ (resp. $C$) is homotopy
 equivalent to $\partial N_{1/2}(L)=\partial X$ (resp. $S^{1}$), the
 projection $\pi |W$ determines a homotopy class $[\pi |W]\in [\partial
 X, S^{1}]$.  We will give an extension of $[\pi |W]$ in $[X, S^{1}]$.
 Since $S^{1}$ is the Eilenberg-MacLane space $K(\Z, 1)$, there are
 natural bijections $[X, S^{1}]\rightarrow H^{1}(X;\Z )$ and $[\partial
 X, S^{1}]\rightarrow H^{1}(\partial X;\Z )$ which commute the
 restriction maps (cf. Spanier \cite{Spanier}).  Combining these maps
 with Poincar\'{e}-Lefschetz duality (see Massey \cite{Massey} for the
 locally finite homology version), we have the following
 sign-commutative diagram.  
\begin{equation}
 \begin{array}{ccccc}
  [X, S^{1}] & \rightarrow & [\partial X, S^{1}] & & \\
  \parallel & & \parallel & & \\
  H^{1}(X;\Z ) & \rightarrow & H^{1}(\partial X;\Z ) &
   \stackrel{\delta}{\rightarrow} & H^{2}(X, \partial X;\Z
   ) \\
  \downarrow & & \downarrow & & \downarrow \\
  H_{2}^{\infty}(X, \partial X;\Z ) &
   \stackrel{\partial}{\rightarrow} & H_{1}(\partial X;\Z ) &
   \rightarrow & H_{1}^{\infty}(X;\Z ) 
 \end{array}
\end{equation}
 Here, the horizontal rows are cohomology and homology exact sequences and
 the vertical arrows are Poincar\'{e}-Lefschetz duality isomorphisms.
 As noted above, we may consider that $[\pi |W]$ belongs to $[\partial X,
 S^{1}]=H^{1}(\partial X;\Z )$.  We claim that $\delta [\pi |W]=0$.
 Through the Poincar\'{e}-Lefschetz duality $[\pi |W]$ corresponds to
 the homology class represented by the fiber of $\pi $ in
 $H_{1}(\partial X;\Z )$.  Consequently the class $[\pi |W]$
 corresponds to the class represented by the union of longitudes of
 $N(L)$.  Since the longitudes are preferred, the class vanishes in
 $H_{1}^{\infty}(X;\Z )$ which implies that $\delta [\pi |W]=0$.
 Hence, by the exactness of the sequence, we have a class in
 $H^{1}(X;\Z )=[X, S^{1}]$ which restricts to $[\pi |W]$.  
\end{proof} 
\begin{AppendixRem}
 This extension lemma does not hold under the condition of the original
 incorrect theorem in \cite{Top_paper}.  In fact, the consideration of
 framings of the tangent bundles was necessary.  
\end{AppendixRem}
 In the diagram (\ref{h-principle_diagram}) the differential map
 $d:\mathrm{Sbm}(X, C)\rightarrow\mathrm{Max}(TX, TC)$ is a
 weak homotopy equivalence.  Therefore, there exists
 $\psi\in\mathrm{Sbm}(X, C)$ such that $d\psi$ is homotopic to
 $\Phi$ in $\mathrm{Max}(TX, TC)$.  Thus $d\rho (\psi)=\rho
 (d\psi)$ is homotopic to $d(\pi |W)$ in $\mathrm{Max}(TW, TC)$.
 Since the differential map is a weak homotopy equivalence, 
 $\rho (\psi)$ and $\pi |W$ are regularly homotopic.  Moreover,
 the restriction map $\rho :\mathrm{Sbm}(X,
 C)\rightarrow\mathrm{Sbm}(W, C)$ is a fibration, thus the regular
 homotopy from $\rho (\psi)$ to $\pi |W$ is covered by a regular homotopy
 of $\psi$.  Hence we conclude that there exists
 $\varphi\in\mathrm{Sbm}(X, C)$ whose restriction to $W$ is
 $\pi |W$.  This completes the proof of Theorem \ref{correct_thm}.  
\end{proof}
Finally, we give the proof of  Theorem \ref{thm_B}.  In the proof below, the
description of cutting open the residual components is improved in comparison
with the proof in \cite{Top_paper}.  
\begin{proof}[Proof of Theorem \ref{thm_B}]
 Let $L$ be any $n$-component link in $M$.  Choose a framing $\nu
 :\bigsqcup_{j=1}^{n}(S^{1}\times D^{2})_{j}\rightarrow N(L)$.  By
 twisting the framing of a component once in the meridional direction if necessary, we may
 assume that there exists a trivialization of $TM$ whose restriction to
 $N(L)$ is equal to the trivialization induced by the chosen framing of
 $L$.  Let $\pi :N(L)\rightarrow D^{2}$ be the projection defined as in the
 proof of Theorem \ref{correct_thm}.  Now we consider the following
 commutative diagram.  
\begin{equation}\label{thm_B_diagram}
 \begin{array}{ccc}
  \mathrm{Sbm}(M, \R^{2}) & \stackrel{d}{\rightarrow} & \mathrm{Max}(TM,
   T\R^{2}) \\
  \rho\downarrow & & \rho\downarrow \\
  \mathrm{Sbm}(N(L), \R^{2}) & \stackrel{d}{\rightarrow} &
   \mathrm{Max}(TN(L), T\R^{2})
 \end{array}
\end{equation}
Here, the restriction maps $\rho$ are fibrations by Lemma
 \ref{Phillips_byproduct} and the differential maps $d$ are weak
 homotopy equivalences by Theorem \ref{Phillips_OM_thm} and
 \ref{Phillips_HD_thm}.  We claim the existence of an extension of $d\pi
 $.  
\begin{AppendixClaim}
 There exists $\Phi\in\mathrm{Max}(TM, T\R^{2})$ such that $\rho
 (\Phi)=d\pi $.  
\end{AppendixClaim}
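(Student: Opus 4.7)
The plan is to exploit the compatible trivialization $TM\cong M\times\R^{3}$ (arranged by twisting $\nu$ once meridionally if necessary) to split the extension problem into a trivial bundle-theoretic piece and an almost-trivial extension of the base map. Under that trivialization, the bundle morphism $d\pi$ on $N(L)$ takes the form
\[
 (x,(v_{1},v_{2},v_{3}))\longmapsto (\pi(x),(v_{2},v_{3})),
\]
where $v_{1}$ is the direction tangent to the cores of $L$. Thus $d\pi$ factors as the base map $\pi:N(L)\rightarrow D^{2}\subset\R^{2}$ together with the constant surjective linear map $\R^{3}\rightarrow\R^{2}$, $(v_{1},v_{2},v_{3})\mapsto (v_{2},v_{3})$. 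It will therefore be enough to extend the base map $\pi$ to $M$ and use the same constant fiber formula to define $\Phi$.

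For the base extension, I would observe that $\R^{2}$ is contractible and that the inclusion $N(L)\hookrightarrow M$ is a cofibration (as a codimension zero submanifold with collared boundary). So $\pi$ is null-homotopic, and the homotopy extension property produces a smooth map $g:M\rightarrow\R^{2}$ with $g|N(L)=\pi$. With such a $g$ fixed, I would then define $\Phi:TM\rightarrow T\R^{2}$ by $\Phi(x,(v_{1},v_{2},v_{3})):=(g(x),(v_{2},v_{3}))$, using the chosen global trivialization of $TM$. The fiberwise map is surjective at every point, so $\Phi\in\mathrm{Max}(TM,T\R^{2})$, and by construction $\rho(\Phi)=d\pi$.

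The main contrast with the parallel extension step in the proof of Theorem \ref{correct_thm} is that here the target of $\pi$ is $\R^{2}$, which is contractible, so no cohomological obstruction of the type $\delta[\pi|W]\in H^{2}$ arises and no preferred-longitude hypothesis is required. The only real work has already been done in the preparatory step of arranging a trivialization of $TM$ compatible with $\nu$ (by twisting the framing once in the meridional direction, as noted at the start of the proof), after which the claim is essentially automatic.
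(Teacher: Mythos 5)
Your proposal is correct and follows essentially the same route as the paper: use the global trivialization of $TM$ compatible with $\nu$ to reduce the bundle-morphism extension to extending the base map $\pi$, and then invoke contractibility of $\R^{2}$ (the paper simply cites the analogous claim in the proof of Theorem~\ref{correct_thm} and notes the extension of $\pi$ is clear since $\R^{2}$ is contractible). You merely make explicit the cofibration/homotopy-extension step and the fiberwise formula for $\Phi$ that the paper leaves implicit.
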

\begin{proof}[Proof of Claim]
 Since the trivialization of $TM|N(L)$ induced by the chosen framing of
 $L$ is the restriction of a trivialization of $TM$, as in the proof of
 the claim in the proof of Theorem \ref{correct_thm}, we only have to
 show that the map $\pi $ extends to $M$ up to homotopy.  However, since
 $\R^{2}$ is contractible this is clear.  
\end{proof}
\noindent
 Now, chasing the diagram (\ref{thm_B_diagram}) in the same way as in
 the proof of Theorem~\ref{correct_thm} shows that there exists an
 extension $\hat{\varphi}\in\mathrm{Sbm}(M, \R^{2})$ of $\pi $.  If the
 union of compact components of $\hat{\varphi}^{-1}(0)$ is equal to $L$,
 then set $\varphi:=\hat{\varphi}$ and we are done.  Otherwise, let $R$
 denote the union of compact components of $\hat{\varphi}^{-1}(0)$ which
 are not contained in $L$.  Note that $R$ has at most countably
 infinitely many components.  We will cut open these residual circles $R$
 by curves ending to ends of $M$.  It suffices to consider the case
 that there are infinitely many components of $R$.  The proof in the
 case of only finitely many components is similar and simpler.  Note that the
 components of $R$ cannot accumulate.   Now we fix an increasing
 filtration by codimension 0 compact connected submanifolds $N_{k}$ of
 $M\ (k\in\Z_{\geq 0})$ such that $\cup_{k=0}^{\infty}N_{k}=M$.  (We may
 assume that $M$ is connected.)  Suppose
 that we choose a decreasing filtration by open subsets $U^{e}_{k}$,
 each of which is a component of $M\setminus N_{k}$  for $k\in\Z_{\geq
 0}$.  Then it defines an end $e$ of $M$.  Here, we assume that
 $\partial N_{k}\cap R=\emptyset$ for any $k\in\Z_{\geq 0}$.  Let
 $\mathcal{E}$ denote the subset of the end set of $M$ consisting of all
 ends $e=\{ U^{e}_{k}\}_{k\in\Z_{\geq 0}}$ such that $U^{e}_{k}\cap
 R\neq\emptyset$ for any $k\in\Z_{\geq 0}$.  Since $\mathcal{E}$ is at
 most a countable set, we index it by natural numbers: $\mathcal{E}=\{
 e_{m}\}_{m\in\N}$.  Also, since there are at most countably many
 components of $R$, we number them as follows.  First, number the
 components of $R\cap N_{0}$ as $R_{1}\sqcup R_{2}\sqcup\cdots\sqcup
 R_{\ell_{0}}$, next $R\cap (N_{1}\setminus\Int
 N_{0})=R_{\ell_{0}+1}\sqcup\cdots\sqcup R_{\ell_{1}}$, and inductively
 $R\cap (N_{k}\setminus\Int N_{k-1})=R_{\ell_{k-1}+1}\sqcup\cdots\sqcup
 R_{\ell_{k}}$ for $k\in\N$.  

 We then define, inductively, simple curves $\alpha_{m}:[0,
 \infty)\rightarrow M\ (m\in\Z_{\geq 0})$ which cut $R$ open.  
 First, for the end $e_{1}=\{
 U^{1}_{k}\}_{k\in\Z_{\geq 0}}\in\mathcal{E}$, the sequence of the
 components of $R\cap(\cup_{k=0}^{\infty}(U^{1}_{k}\cap N_{k+1}))$ is an infinite
 subsequence of the components of $R$, which tends to the end $e_{1}$.
 Then we choose a simple curve $\alpha_{1}$ in
 $\cup_{k=0}^{\infty}(U^{1}_{k}\cap N_{k+1})$ which passes through one
 point in each circle of $R\cap(\cup_{k=0}^{\infty}(U^{1}_{k}\cap
 N_{k+1}))$ and tends to $e_{1}$.  Here, we choose $\alpha_{1}$ so that
 it passes through $R_{\ell}$ in order with respect to the indices
 $\ell$ of the circle $R_{\ell}$.  Set
 $R^{(1)}:=R\cap(\cup_{k=0}^{\infty}(U^{1}_{k}\cap N_{k+1}))$. 
 Inductively, for the end $e_{m}=\{ U^{m}_{k}\}_{k\in\Z_{\geq
 0}}\in\mathcal{E}$, the sequence of the components of
 $R^{(m)}:=(R\setminus\cup_{i=1}^{m-1}R^{(i)})\cap(\cup_{k=0}^{\infty}(U^{m}_{k}\cap
 N_{k+1}))$ is an infinite subsequence of circles of $R$ and we choose a
 simple curve $\alpha_{m}$ in $\cup_{k=0}^{\infty}(U^{m}_{k}\cap
 N_{k+1})$ which passes through $R^{(m)}$ in order and tends to $e_{m}$.
 Moreover, we choose all the curves $\alpha_{m}$ so that they do not
 intersect $L$ and are mutually disjoint.  Note that
 $R^{(0)}:=R\setminus\cup_{m=1}^{\infty}R^{(m)}$ is compact.  Thus, the
 components of $R^{(0)}$ are finitely many circles and we can easily
 choose a simple curve $\alpha_{0}$ which passes through those circles
 and tends to an end of $M$.  As is similar to the case of $\alpha_{m}$
 above, we take $\alpha_{0}$ so that it does not intersect $L$ nor
 $\alpha_{m}\ (m\in\N)$.   Now we claim the following.  
\begin{AppendixClaim}
 $(M\setminus\cup_{m=0}^{\infty}\mathrm{Im}(\alpha_{m}), L)$ is
 diffeomorphic to $(M, L)$.  
\end{AppendixClaim}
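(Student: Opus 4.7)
The plan is to construct an explicit diffeomorphism $\Phi : M \to M\setminus\bigcup_{m=0}^{\infty}\mathrm{Im}(\alpha_m)$ that is the identity on a neighborhood of $L$; such a $\Phi$ automatically realizes the required diffeomorphism of pairs.

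First I would verify that the family $\{\mathrm{Im}(\alpha_m)\}_{m\ge 0}$ is locally finite in $M$. By construction each $\alpha_m$ lies in $\bigcup_k(U^m_k\cap N_{k+1})$, and since the $e_m$'s are distinct ends and each $\alpha_m$ eventually leaves every $N_{k_0}$, only finitely many $\mathrm{Im}(\alpha_m)$ meet any given $N_{k_0}$. Combined with the pairwise disjointness of the $\alpha_m$'s and their disjointness from $L$, this lets me choose pairwise disjoint open tubular neighborhoods $T_m\subset M\setminus L$ of $\mathrm{Im}(\alpha_m)$.

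The central step is a local model lemma: for any proper embedding $\alpha:[0,\infty)\to X$ of a half-line into an open smooth manifold of dimension at least two, and any open tubular neighborhood $T$ of $\mathrm{Im}(\alpha)$ in $X$, there is a smooth diffeomorphism $\phi:T\to T\setminus\mathrm{Im}(\alpha)$ equal to the identity in a neighborhood of $\partial T$ in $X$. Modelling $T$ on $\{(x,y,z)\in\mathbb{R}^3 : x^2+y^2<1,\ z>-1\}$ with $\mathrm{Im}(\alpha)=\{(0,0,z):z\ge 0\}$, one constructs $\phi$ by ``bending'' the positive part of the $z$-axis smoothly into the region $\{r>0\}$: fix a smooth proper embedding $\psi:(-1,\infty)\to T\setminus\mathrm{Im}(\alpha)$ that coincides with the $z$-axis for $z\le 0$ and satisfies $\psi(z)\in\{r>0\}$ for $z>0$, and extend this via a rotationally symmetric tubular-neighborhood construction --- with a bump function vanishing near $r=1$ and $z=-1$ --- to a self-diffeomorphism of $T$ whose image avoids $\mathrm{Im}(\alpha)$. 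Applying the local lemma in each $T_m$ yields $\phi_m:T_m\to T_m\setminus\mathrm{Im}(\alpha_m)$ supported in $T_m$; pasting these with the identity on $M\setminus\bigcup_m T_m$ gives a global smooth bijection $\Phi$, which is a diffeomorphism because the $T_m$ are locally finite. Since $T_m\cap L=\emptyset$ for all $m$, the map $\Phi$ restricts to the identity on $L$, completing the proof.

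The main obstacle is the rigorous construction of the local model diffeomorphism $\phi$ with the prescribed identity-near-boundary behavior. At the level of abstract diffeomorphism type the lemma is transparent: both $T$ and $T\setminus\mathrm{Im}(\alpha)$ are contractible open three-manifolds that are simply connected at infinity---any meridional loop around the removed ray contracts by sliding along the ray to the end $e_m$---and hence both are diffeomorphic to $\mathbb{R}^3$. The work is in realizing this by a concrete $\phi$ that is the identity near $\partial T$, rather than an unrelated abstract diffeomorphism; this requires a careful bump-function construction around the ``bent axis'' $\psi$ and verification of smoothness and bijectivity, but it is routine once the geometric picture is set up.
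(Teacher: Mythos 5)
Your overall architecture is the same as the paper's: isolate each ray $\alpha_m$ in a local model, produce a diffeomorphism of the model onto the model minus the ray that is the identity near the frontier, and glue by local finiteness (the paper's model is the closed tube $P=D^{2}\times[0,\infty)$ with the ray $\{0\}\times[1,\infty)$; yours is the open version). The paper simply asserts the local model lemma (``we can easily construct a diffeomorphism between $P$ and $P\setminus\mathrm{Im}(\alpha)$ which is the identity near the boundary''), so on that score you are attempting more than the paper does.

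However, the construction you sketch for the local lemma has a gap at exactly the point that carries all the content. Bending the core curve off the ray and extending by a tubular-neighborhood construction produces a self-embedding $\phi:T\to T$ with $\phi(T)\cap\mathrm{Im}(\alpha)=\emptyset$ and $\phi=\mathrm{id}$ near $\partial T$, but nothing in that recipe forces $\phi(T)$ to equal $T\setminus\mathrm{Im}(\alpha)$: the image of a ``bump-function supported'' perturbation is $T$ minus some closed set containing the ray, and showing that this closed set is \emph{exactly} the ray is the whole problem (note also that no map equivariant under rotation about the original axis can work, since the core circle's image would have to leave the axis while the axis point has no well-defined angular coordinate). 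Your fallback observation that $T$ and $T\setminus\mathrm{Im}(\alpha)$ are both diffeomorphic to $\R^{3}$ does not help, because the rel-boundary condition is essential for gluing; and, incidentally, a meridian of the removed ray is killed by sliding it past the \emph{tip} of the ray, not toward the end $e_m$. The standard way to close this gap is different from axis-bending: take a closed tubular neighborhood $N$ of the ray (a capped half-infinite solid cylinder), observe that radial projection from the core gives an identification $N\setminus\mathrm{Im}(\alpha)\cong\partial N\times(0,1]$ with $\partial N\times\{1\}=\partial N$, and then exhibit a diffeomorphism $N\to\partial N\times(0,1]$ that is the identity on a collar of $\partial N$ (both sides being diffeomorphic to $\R^{2}\times[0,\infty)$ rel a boundary collar); this ``absorbs'' the ray into the end and is the identity outside $N$. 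With that substitution your proof matches the paper's.
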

\begin{proof}
 Set $P:=D^{2}\times [0, \infty)$ and let $\alpha :[0,
 \infty)\rightarrow P$ be the curve defined by $\alpha (t):=(0, t+1)$.
 Then we can easily construct a diffeomorphism between $P$ and
 $P\setminus\mathrm{Im}(\alpha )$ which is the identity near the
 boundary $(D^{2}\times\{ 0\})\cup (\partial D^{2}\times [0, \infty))$.
 By the construction of $\alpha_{m}$ the set $\{\alpha_{m}(0)\}$ is
 discrete in $M$ and the curves $\{\alpha_{m}\}$ do not accumulate.
 Hence, the claim follows.  
\end{proof}
\noindent
 Setting $\varphi :=\hat\varphi 
 |(M\setminus\cup_{m=0}^{\infty}\mathrm{Im}(\alpha_{m}))$, we have the desired
 submersion.  This completes the proof of Theorem \ref{thm_B}
\end{proof}


\begin{acknowledgements}\label{ackref}
The author would like to express his hearty gratitude to Gilbert Hector
 who informed him that his earlier work contained an error.  He also thanks
 Daniel Peralta-Salas for studying, with G. Hector,  the problem which
 the author considered before and was caused by the author's
 misunderstanding.  Without their notice the author would never have found 
 the correction and new results in the point of view of the author.  
\end{acknowledgements}


\begin{thebibliography}{}
\baselineskip=14pt
\bibitem[HP]{Hector-Peralta}
{\bf G. Hector,  D. Peralta-Salas}, Integrable embeddings and
	foliations, {\em Amer. J. Math.} {134} (2012), 773--825.
%
\bibitem[Ms]{Massey}
{\bf W. S. Massey}, {\em Homology and cohomology theory}, Marcel Dekker
	    Inc., 1978.
%
\bibitem[My]{Top_paper}
{\bf S. Miyoshi}, Links and globally completely integrable vector
	fields on an open 3-manifold, {\em Topology} {34} (1995) 383--387.
%
\bibitem[P]{Phillips}
{\bf A. Phillips}, Submersions of open manifolds, {\em Topology}
	{6} (1966) 171--295.
%
\bibitem[R]{Rolfsen}
{\bf D. Rolfsen}, {\em Knots and Links}, AMS Chelsea publ., 1976. 
%
\bibitem[S]{Spanier}
{\bf E. H. Spanier}, {\em Algebraic Topology}, Springer, 1995.
%
\bibitem[W]{Whitehead}
{\bf J. H. C. Whitehead}, The immersion of an open 3-manifold in
	Euclidean 3-space, {\em Proc. London Math. Soc.} {11} (1961) 81--90.
%
\end{thebibliography}
\end{document}